\author{Kevin Ford}
\author{Paul Pollack}
\address{Department of Mathematics\\University of Illinois\\1409 West Green Street\\Urbana, Illinois 61801}
\thanks{The first author was supported by NSF Grant DMS-0901339. The second author was supported by an
NSF Postdoctoral Fellowship (award DMS-0802970). The research was conducted in part while the authors
were visiting the Institute for Advanced Study, the first author supported by grants from the Ellentuck Fund and The Friends of the Institute For Advanced Study. Both authors thank the IAS for its hospitality and
excellent working conditions.}
\title{On common values of $\phi(n)$ and $\sigma(m)$, II}
\numberwithin{equation}{section}
\newtheorem{thm}{Theorem}[section]
\newtheorem{prop}[thm]{Proposition}
\newtheorem*{thmA}{Theorem A}
\newtheorem{lem}[thm]{Lemma}
\theoremstyle{remark}
\newtheorem*{rmks}{Remarks}
\DeclareMathAlphabet{\curly}{U}{rsfs}{m}{n}
\newcommand{\RR}{{\mathbb R}}
\renewcommand{\a}{\ensuremath{\alpha}}
\newcommand{\fancyA}{{\curly{A}}}
\newcommand{\fancyS}{{\curly S}}
\newcommand{\fancyR}{{\curly{R}}}
\renewcommand{\le}{\leqslant}
\renewcommand{\leq}{\leqslant}
\renewcommand{\ge}{\geqslant}
\renewcommand{\geq}{\geqslant}
\newcommand{\vone}{{\mathbf{1}}}
\renewcommand{\d}{\delta}
\renewcommand{\rho}{\varrho}
\newcommand{\bxi}{\boldsymbol\xi}
\newcommand{\vxi}{\bxi}
\newcommand{\vx}{\mathbf{x}}
\newcommand{\vy}{\mathbf{y}}
\begin{document}
\renewcommand{\a}{\ensuremath{\alpha}}
\newcommand{\pfrac}[2]{\left(\frac{#1}{#2}\right)}

\newcommand{\be}{\begin{equation}}
\newcommand{\ee}{\end{equation}}
\newcommand{\benn}{\begin{equation*}}
\newcommand{\eenn}{\end{equation*}}
\newcommand{\bal}{\begin{align*}}
\newcommand{\ea}{\end{align*}}
\newcommand{\eal}{\ensuremath{\end{align*}}}
\newcommand{\bea}{\begin{eqnarray}}
\newcommand{\eea}{\end{eqnarray}}

\renewcommand{\labelenumi}{(\roman{enumi})}
\def\d{\delta}
\def\D{\curly{D}}
\def\A{\curly{A}}
\def\fancyA{\curly{A}}
\def\J{\curly{J}}
\def\V{\curly{V}}
\def\N{\mathbf{N}}
\def\Q{\mathbf{Q}}
\def\Z{\mathbf{Z}}
\def\Cc{\curly{C}}
\def\Pp{\curly{P}}
\def\Ss{\fancyS}
\newcommand{\om}{\Omega}

\begin{abstract} For each positive-integer valued arithmetic function $f$, let $\V_{f}
 \subset \N$ denote the image of $f$, and put $\V_{f}(x) := \V_f\cap [1,x]$
 and $V_f(x) := \#\V_f(x)$. Recently Ford, Luca, and Pomerance showed that $\V_{\phi}\cap\V_{\sigma}$ is infinite, where $\phi$ denotes Euler's totient function and $\sigma$ is the usual sum-of-divisors function. Work of Ford shows that $V_{\phi}(x) \asymp V_{\sigma}(x)$ as $x\to\infty$. Here we prove a result complementary to that of Ford et al., by showing that most $\phi$-values are not $\sigma$-values, and vice versa. More precisely, we prove that as $x\to\infty$,
\[ \#\{n\leq x: n \in \V_{\phi} \cap \V_{\sigma}\} \leq \frac{V_{\phi}(x)+V_{\sigma}(x)}{(\log\log{x})^{1/2+o(1)}}. \]
\end{abstract}

\keywords{Euler's function, sum of divisors function, totients}
\subjclass[2000]{Primary: 11N37, Secondary: 11A25}

\maketitle

\section{Introduction}
\subsection{Summary of results} For each positive-integer valued arithmetic function $f$, let $\V_{f}
 \subset \N$ denote the image of $f$, and put $\V_{f}(x) := \V_f\cap [1,x]$
 and $V_f(x) := \#\V_f(x)$. In this paper we are primarily concerned with
 the cases when $f=\phi$, the Euler totient function, and when $f=\sigma$,
 the usual sum-of-divisors function. When $f=\phi$, the study of the
 counting function $V_f$ goes back to Pillai \cite{pillai29}, and was
 subsequently taken up by Erd\H{o}s \cite{erdos35, erdos45}, Erd\H{o}s and
 Hall \cite{EH73, EH76},  Pomerance \cite{pomerance86}, Maier and Pomerance
 \cite{MP88}, and Ford \cite{ford98} (with an announcement in
 \cite{ford98A}). From the sequence of results obtained by these authors,
we mention Erd\H{o}s's asymptotic formula (from \cite{erdos35}) for
 $\log\frac{V_f(x)}{x}$, namely
\begin{equation}\label{eq:erdosestimate0}
 V_f(x) = \frac{x}{(\log{x})^{1+o(1)}} \quad (x\to\infty) \end{equation}
and the much more intricate determination of the precise order of magnitude by Ford,
\begin{equation}\label{eq:fordestimate0} V_f(x) \asymp \frac{x}{\log{x}} \exp(C(\log_3{x}-\log_4{x})^2+D\log_3{x}-(D+1/2-2C)\log_4{x}). \end{equation}
Here $\log_k$ denotes the $k$th iterate of the natural logarithm, and the constants $C$ and $D$ are defined as follows: Let
\begin{equation}\label{eq:somedefs} F(z) :=\sum_{n=1}^{\infty} a_n z^n, \quad\text{where}\quad a_n = (n+1)\log(n+1)-n\log{n}-1.\end{equation}
Since each $a_n >0$ and $a_n \sim \log{n}$ as $n\to\infty$, it follows that $F(z)$ converges
to a continuous, strictly increasing function on $(0,1)$, and $F(z)\to\infty$ as $z\uparrow 1$. Thus, there is a unique real number $\rho$ for which
\begin{equation}\label{eq:rhodef} F(\rho) = 1 \quad (\rho = 0.542598586098471021959\ldots).\end{equation}
In addition, $F'$ is strictly increasing, and
$F'(\rho) = 5.697758 \ldots$.
Then $C = \frac{1}{2|\log \rho|} = 0.817814\ldots$ and
$D = 2C(1+\log F'(\rho) - \log(2C)) - 3/2 = 2.176968 \ldots$.
In \cite{ford98}, it is also shown that \eqref{eq:fordestimate0} holds for a wide class of $\phi$-like functions, including $f=\sigma$. Consequently,  $V_\phi(x) \asymp V_{\sigma}(x)$.

Erd\H{o}s (see \cite[8, p. 172]{erdos59} or \cite{EG80}) asked if it could be proved that infinitely many natural numbers appear in both $\V_{\phi}$ and $\V_{\sigma}$. This question was recently answered by Ford, Luca, and Pomerance \cite{FLP10}. Writing $V_{\phi,\sigma}(x)$ for the number of common elements of $\V_\phi$ and $\V_\sigma$ up to $x$, they proved that
\[ V_{\phi,\sigma}(x) \geq \exp((\log\log{x})^{c}) \]
for some positive constant $c > 0$ and all large $x$
(in \cite{Ga} this is shown for \emph{all} constants $c>0$).
This lower bound is probably very far from the truth. For example, if $p$ and $p+2$ form a twin prime pair, then $\phi(p+2)=p+1=\sigma(p)$; a quantitative form of the twin prime conjecture then implies that $V_{\phi,\sigma}(x) \gg x/(\log{x})^2$. In Part I, we showed that a stronger conjecture of the same type allows for an improvement. Roughly, our result is as follows:

\begin{thmA} Assume a strong uniform version of Dickson's prime $k$-tuples conjecture. Then as $x\to\infty$,
\[ V_{\phi,\sigma}(x) = \frac{x}{(\log{x})^{1+o(1)}}. \]
\end{thmA}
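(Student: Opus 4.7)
The target $V_{\phi,\sigma}(x) = x/(\log x)^{1+o(1)}$ splits into matching upper and lower bounds. The upper bound is immediate from \eqref{eq:erdosestimate0}: every element of $\V_\phi \cap \V_\sigma$ is in particular a $\phi$-value, so $V_{\phi,\sigma}(x) \leq V_\phi(x) = x/(\log x)^{1+o(1)}$. The entire work therefore goes into proving the matching lower bound $V_{\phi,\sigma}(x) \geq x/(\log x)^{1+o(1)}$.

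The starting point is the elementary identity $\phi(p+2) = p+1 = \sigma(p)$, valid whenever $p$ and $p+2$ are both prime. A quantitative form of Dickson's conjecture applied to the shifts $\{0,2\}$ already yields $V_{\phi,\sigma}(x) \gg x/(\log x)^{2}$, which falls a full factor of $\log x$ short of the goal. To close this gap I would enrich the construction with \emph{seed pairs} $(m,\ell)$: coprime positive integers, supported on small primes, satisfying $\phi(m) = \sigma(\ell) =: A$. For any such seed and any twin prime pair $(p, p+2)$ with $p(p+2)$ coprime to $m\ell$,
\[
A(p+1) = \phi(m(p+2)) = \sigma(\ell p),
\]
so $A(p+1)$ is a common value. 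Ranging over seeds and over twin prime pairs generates a large family of common values whose size one then estimates.

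Concretely, the plan is: first, to select a family $\FF$ of seed pairs whose multipliers $A$ mirror the anatomy of normal $\phi$-preimages from \cite{ford98}, so that the set of resulting values $\{A(p+1) \leq x\}$ has size comparable with $V_\phi(x)$; second, for each seed $(m,\ell) \in \FF$, to apply the hypothesized strong uniform form of Dickson's conjecture to the pair of linear forms $n, n+2$, subject to the coprimality conditions imposed by $m\ell$, to obtain an asymptotic count of admissible primes $p \leq x/A$; third, to sum these counts over $\FF$; and fourth, to bound the multiplicity with which a single $v$ can be represented as $A(p+1)$ within the construction, so as to convert the gross count into a count of distinct values.

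The principal technical obstacle is the last comparison with $V_\phi(x)$, which forces $\FF$ to essentially capture the full anatomy of typical $\phi$-preimages in the sense of Ford \cite{ford98}: for a typical normal $m$, I must exhibit at least one $\ell$ coprime to $m$ with $\sigma(\ell) = \phi(m)$, and maintain tight enough control on the structure of $\ell$ that the representation $v = A(p+1)$ is nearly unique. Producing enough such $\ell$, uniformly and quantitatively, is the heart of the matter, and it is at this step that the hypothesized uniformity in the $k$-tuples conjecture (across shifted linear forms and sieving moduli, not merely for a single fixed tuple) is indispensable.
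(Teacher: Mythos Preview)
This paper does not contain a proof of Theorem~A. The statement is quoted from Part~I of the series (``In Part~I, we showed that a stronger conjecture of the same type allows for an improvement. Roughly, our result is as follows: \ldots''), and the present paper (Part~II) is devoted entirely to the complementary upper bound in Theorem~\ref{thm:main}. So there is no ``paper's own proof'' here to compare your proposal against.

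That said, your sketch has a structural gap worth flagging. Your plan hinges on producing a rich family $\FF$ of seed pairs $(m,\ell)$ with $\phi(m)=\sigma(\ell)$ whose multipliers $A$ ``mirror the anatomy of normal $\phi$-preimages from \cite{ford98}.'' But exhibiting, for a \emph{typical} $\phi$-preimage $m$, some coprime $\ell$ with $\sigma(\ell)=\phi(m)$ is precisely the assertion that a typical $\phi$-value is also a $\sigma$-value---i.e., the very statement you are trying to prove. You acknowledge this is ``the heart of the matter'' but do not indicate any mechanism for producing such $\ell$; in particular, you have not said how the uniform $k$-tuples hypothesis is to be invoked at this step. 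Without that, the argument is circular: the twin-prime layer on top only transports common values you have already found, it does not create them.

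A non-circular route (and the one more in the spirit of how the $k$-tuples conjecture is typically leveraged in this area) is to build $a$ and $a'$ \emph{in parallel} from scratch: fix a target factorization shape for $v$, and use the conjecture to simultaneously produce primes $p_i$ with $p_i-1$ equal to prescribed blocks of that shape \emph{and} primes $q_j$ with $q_j+1$ equal to (possibly different) prescribed blocks, so that $\prod(p_i-1)=\prod(q_j+1)=v$. The uniformity is then used across the many linear forms $n\mapsto (\text{block})\cdot n\pm 1$ and the associated sieving moduli, not merely for the single pair $\{n,n+2\}$. If you intend something like this, the proposal should make that explicit rather than packaging the hard step inside the existence of seeds.
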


Theorem A suggests that $V_{\phi,\sigma}(x)$ is much larger than we might naively expect. This naturally leads one to inquire about what can be proved in the opposite direction; e.g., could it be that a positive proportion of $\phi$-values are also $\sigma$-values? The numerical data up to $10^{9}$, exhibited in Table \ref{tbl:data}, suggests that the proportion of common values is decreasing, but the observed rate of decrease is rather slow.
\begin{table}\label{tbl:data}
\begin{tabular}{l||r|r|r|r|r}
$N$ & $V_{\phi}(N)$ & $V_{\sigma}(N)$ & $V_{\phi,\sigma}(N)$ & $V_{\phi,\sigma}(N)/V_{\phi}(N)$ & $V_{\phi,\sigma}(N)/V_{\sigma}(N)$\\\hline
   10000   &    2374   &    2503   &   1368 &  0.5762426  & 0.5465441\\
   100000  &    20254  &    21399  &   11116 &  0.5488299  & 0.5194635\\
  1000000  &   180184  &   189511  &   95145  & 0.5280436  & 0.5020553\\
 10000000  &  1634372  &  1717659  &  841541  & 0.5149017  & 0.4899348\\
 100000000 &  15037909 &  15784779 &  7570480 &  0.5034264 &  0.4796063\\
1000000000 & 139847903 & 146622886 & 69091721 &  0.4940490 &  0.4712206\\
\end{tabular}
\caption{Data on $\phi$-values, $\sigma$-values, and common values up to $N=10^k$, from $k=5$ to $k=9$.}
\end{table}
Our principal result is the following estimate, which implies in particular that almost all $\phi$-values are not $\sigma$-values, and vice versa.
\begin{thm}\label{thm:main} As $x\to\infty$,
\begin{equation}\label{eq:main} V_{\phi,\sigma}(x)\leq \frac{V_{\phi}(x)+V_{\sigma}(x)}{(\log\log{x})^{1/2+o(1)}}. \end{equation}
\end{thm}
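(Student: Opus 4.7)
The strategy is to exploit the detailed anatomy of $\phi$- and $\sigma$-preimages that underlies Ford's order-of-magnitude estimate \eqref{eq:fordestimate0}. From that work, outside an exceptional set of size $o(V_{\phi}(x))$, each $v \in \V_{\phi}(x)$ is shown to possess a canonical preimage $n$ whose prime factorization is forced into a rigid ``staircase'' shape: the primes $p_1 > p_2 > \cdots > p_r$ of $n$ fall into prescribed size windows governed by the constant $\rho$ from \eqref{eq:rhodef}, and each shifted prime $p_i - 1$ decomposes further in an analogous way through its own preimage structure. The identical framework applies to $\V_{\sigma}(x)$ with $p+1$ in place of $p-1$, producing canonical preimages $m$ for almost every $v \in \V_{\sigma}(x)$. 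The first step of the proof would be to recall these characterizations, discard the exceptional sets, and reduce the problem to bounding the number of $v \in \V_{\phi,\sigma}(x)$ admitting canonical preimages $n,m$ with $\phi(n) = \sigma(m) = v$.

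The heart of the argument is to introduce an additive statistic $F\colon \N \to \RR$ that distinguishes the $\phi$-image measure from the $\sigma$-image measure. Natural candidates are weighted prime-divisor counts of $v$ sensitive to the arithmetic disparity between ``$p-1$'' and ``$p+1$'' factorizations --- for instance, joint statistics built from the $2$-adic and $3$-adic valuations of $v$ together with counts of prime divisors of $v$ in prescribed residue classes modulo small moduli, where $p-1$ and $p+1$ distribute systematically differently. The target is a two-sided Erd\H{o}s--Kac-type theorem: on the normal $\phi$-values, the pushforward of $F$ is approximately Gaussian with mean $M_{\phi}(x)$ and variance of order $\log\log x$; on the normal $\sigma$-values, it is Gaussian with a different mean $M_{\sigma}(x)$ and variance of the same order. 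Granting this, any $v \in \V_{\phi}\cap\V_{\sigma}$ must simultaneously lie in both concentration intervals, whose intersection has length $O(1)$; this intersection captures only a $(\log\log x)^{-1/2+o(1)}$ fraction of the mass of either Gaussian, delivering the bound \eqref{eq:main}.

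The main obstacle is carrying out the Erd\H{o}s--Kac step on the image measures, which are highly non-uniform on $[1,x]$: they are concentrated near values with specific Ford staircase structure, so standard additive-function results on uniform measure do not apply directly. One must propagate $F$ through the preimage tree and track how each layer of prime factors contributes, mirroring Ford's second-moment calculations but augmented to handle an additive functional. Identifying an $F$ for which this joint analysis is tractable, and verifying that $M_{\phi}(x)$ and $M_{\sigma}(x)$ genuinely differ (rather than coinciding by an accidental symmetry between the $p-1$ and $p+1$ multiplicative structures), will occupy the bulk of the technical work. A secondary concern is controlling the exceptional-set losses in the Ford characterization finely enough that they do not swamp the final bound of order $(\log\log x)^{-1/2+o(1)}$.
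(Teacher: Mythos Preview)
Your proposal pursues a fundamentally different route from the paper, and the route has a genuine gap.

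The paper does \emph{not} seek a statistic that distinguishes $\phi$-values from $\sigma$-values. Instead, after restricting to preimages in $\A_{\phi}\times\A_{\sigma}$ (your first step), it treats $\phi(a)=\sigma(a')$ as a Diophantine constraint and counts solutions directly via an iterated sieve (Lemma~\ref{lem51}), obtaining a bound on the number of common values of the shape $\frac{x}{\log x}\exp\bigl(-c(\log_2 x)^{1/2+\epsilon/2}\bigr)$, far stronger than needed. The exponent $1/2$ in Theorem~\ref{thm:main} is \emph{not} produced by this sieve; it is the cost of the exceptional-set reduction (Lemma~\ref{lem:capturemost}), specifically of imposing condition~(8), $\sum a_i x_i \le 1-\omega$ with $\omega=(\log_2 x)^{-1/2+\epsilon/2}$.

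Your approach, by contrast, requires an additive functional $F$ for which the pushforward measures on $\V_{\phi}(x)$ and $\V_{\sigma}(x)$ have separated means. The difficulty is that the candidates you name are defeated by the symmetry $p\leftrightarrow -p$: for every odd prime $q$ and every residue class $r\pmod q$, the densities of primes $p$ with $p-1\equiv r$ and with $p+1\equiv r$ coincide; likewise $v_2(p-1)$ and $v_2(p+1)$ are identically distributed over primes. Since a typical $v\in\V_{\phi}(x)$ has the shape $\prod(p_i-1)$ and a typical $w\in\V_{\sigma}(x)$ has the shape $\prod(q_j+1)$ with the $p_i$ and $q_j$ drawn from the \emph{same} size windows (this is exactly why $V_{\phi}(x)\asymp V_{\sigma}(x)$ in \eqref{eq:fordestimate0}), any statistic built from $p$-adic valuations or residue-class prime counts of $v$ will have $M_{\phi}(x)=M_{\sigma}(x)$. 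You flag this ``accidental symmetry'' as a concern, but it is not accidental --- it is structural, and it blocks the argument as outlined. Even setting this aside, your concentration heuristic is loose: two Gaussians with common variance $\asymp\log_2 x$ and means separated by $\Delta$ yield a savings of order $\exp(-c\Delta^2/\log_2 x)$, so obtaining $(\log_2 x)^{-1/2+o(1)}$ would require $\Delta$ of the very specific order $\sqrt{\log_2 x\cdot\log_3 x}$, for which no mechanism is proposed.

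The situation is analogous to bounding the twin-prime count: one cannot do it by showing that $\{p\}$ and $\{p+2\}$ are statistically distinguishable (they are not), but one can sieve the equation $q=p+2$ directly. The paper takes the latter path.
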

The proof of Theorem \ref{thm:main} relies on the detailed structure theory of totients as developed in \cite{ford98}. It would be interesting to know the true rate of decay of $V_{\phi,\sigma}(x)/V_{\phi}(x)$.

\subsection{Sketch}
Since the proof of Theorem \ref{thm:main} is rather intricate and involves a number of technical estimates, we present a brief outline of the argument in this section.

We start by discarding a sparse set of undesirable $\phi$ and $\sigma$-values. More precisely, we identify (in Lemma \ref{lem:capturemost}) convenient sets $\A_{\phi}$ and $\A_{\sigma}$ with the property that almost all $\phi$-values~$\leq x$ have all their preimages in $\A_{\phi}$ and almost all $\sigma$-values $\leq x$ have all their preimages in $\A_{\sigma}$. This reduces us to studying how many $\phi$ and $\sigma$-values arise as solutions to the equation
\[ \phi(a) = \sigma(a'), \quad\text{where}\quad a \in \A_{\phi}, a' \in \A_{\sigma}. \]
Note that to show that $V_{\phi,\sigma}(x)/V_{\phi}(x)\to 0$, we need only count the number of common $\phi$-$\sigma$-values of this kind, and not the (conceivably much larger) number of pairs $(a,a') \in \A_{\phi}\times \A_{\sigma}$ corresponding to these values.

What makes the sets $\A_{\phi}$ and $\A_{\sigma}$ convenient for us? The properties imposed in the definitions of these sets are of two types, \emph{anatomical} and \emph{structural}. By anatomical considerations, we mean general considerations of multiplicative structure as commonly appear in elementary number theory (e.g., consideration of the number and size of prime factors). By structural considerations, we mean those depending for their motivation on the fine structure theory of totients developed by Ford \cite{ford98}.

Central to our more anatomical considerations is the notion of a \emph{normal prime}. Hardy and Ramanujan \cite{HR00} showed that almost all natural numbers $\leq x$ have $\sim \log\log{x}$ prime factors, and Erd\H{o}s \cite{erdos35} showed that the same holds for almost all shifted primes $p-1 \leq x$. Moreover, sieve methods imply that if we list the prime factors of $p-1$ on a double-logarithmic scale, then these are typically close to uniformly distributed in $[0,\log\log{p}]$. Of course, all of this remains true with $p+1$ in place of $p-1$. We assume that the numbers belonging to $\A_{\phi}$ and $\A_{\sigma}$ have all their prime factors among this set of normal primes.

If we assume that numbers $n$ all of whose prime factors are normal generate ``most'' $f$-values (for $f\in\{\phi, \sigma\}$), we are led to a series of linear inequalities among the (double-logarithmically renormalized) prime factors of $n$. These inequalities are at the heart of the structure theory of totients as developed in \cite{ford98}. As one illustration of the power of this approach,
mapping the $L$ largest prime factors of $n$ (excluding the largest) to a point in $\RR^L$, the problem of
estimating $V_f(x)$ reduces to the problem of finding the volume of a certain region of $\RR^L$, called the fundamental simplex.
%
In broad strokes, this is how one establishes Ford's bound \eqref{eq:fordestimate0}. We incorporate these linear inequalities into our definitions of $\A_{\phi}$ and $\A_{\sigma}$. One particular linear combination of renormalized prime factors appearing in the definition of the fundamental simplex is of particular interest to us (see condition (8) in the definition of $\A_{f}$ in \S\ref{sec:afdef} below); that we can assume this quantity is $< 1$ is responsible for the success of our argument.


Suppose now that we have a solution to $\phi(a) = \sigma(a')$, where $(a, a') \in \A_{\phi} \times \A_{\sigma}$. We write $a = p_0 p_1 p_2 \cdots$ and $a'= q_0 q_1 q_2 \cdots$, where the sequences of $p_i$ and $q_j$ are non-decreasing. We cut the first of these lists in two places; at the $k$th prime $p_k$ and at the $L$th place $p_L$. The precise choice of $k$ and $L$ is somewhat technical; one should think of the primes $p_i$ larger than $p_k$ as the ``large'' prime divisors of $a$, those smaller than $p_k$ but larger than $p_L$ as ``small', and those smaller than $p_L$ as ``tiny''. The equation $\phi(a) = \sigma(a')$ can be rewritten in the form
\begin{equation}\label{eq:formtocount} (p_0-1)(p_1-1)(p_2-1)\cdots (p_{k-1}-1)fd = (q_0+1)(q_1+1)(q_2+1)\cdots(q_{k-1}+1)e, \end{equation}
where
\begin{equation}\label{eq:fdedefs0} f:= \phi(p_k \cdots p_{L-1}), \quad d:=\phi(p_L p_{L+1}\cdots), \quad\text{and}\quad e:= \sigma(q_k q_{k+1} \cdots). \end{equation}
To see that \eqref{eq:formtocount} correctly expresses the relation $\phi(a)=\sigma(a')$, we recall that the primes $p_1, \dots, p_k$ are all large, so that by the ``anatomical'' constraints imposed in the definition of $\A_{\phi}$, each appears to the first power in the prime factorization of $a$. An analogous statement holds for the primes $q_1, \dots, q_k$; this follows from the general principle, established below, that $p_i \approx q_i$  provided that either side is not too small. 
There is one respect in which \eqref{eq:fdedefs0} may not be quite right: Since $p_L$ is tiny, we cannot assume a priori that $p_L \neq p_{L-1}$, and so it may be necessary to amend the definition of $d$ somewhat; we ignore this (ultimately minor) difficulty for now.

To complete the argument, we fix $d$ and estimate from above the number of solutions (consisting of $p_0, \dots, p_{k-1}, q_0, \dots, q_{k-1}, e, f$) to the relevant equations of the form \eqref{eq:formtocount}; then we sum over $d$. The machinery facilitating these estimates is encoded in Lemma \ref{lem51}, which is proved by a delicate, iterative sieve argument of a kind first introduced by Maier and Pomerance \cite{MP88} and developed in \cite[\S5]{ford98}. The hypotheses of that lemma include several assumptions about the $p_i$ and $q_j$, and about $e$, $f$, and $d$. All of these rather technical hypotheses are, in our situation, consequences of our definitions of $\A_{\phi}$ and $\A_{\sigma}$; we say more about some of them in a remark following Lemma \ref{lem51}.

\subsection*{Notation} Let $P^+(n)$ denote the largest prime factor of $n$, understood so that $P^{+}(1)=1$, and let $\om(n,U,T)$ denote the total number of prime factors $p$ of $n$
such that $U < p \le T$, counted according to multiplicity.
Constants implied by the Landau $O-$ and Vinogradov $\ll-$ and $\gg-$
symbols are absolute unless otherwise specified.
Symbols in boldface type indicate vector quantities.

\section{Preliminaries}\label{sec:prelim}

\subsection{Anatomical tools}
We begin with two tools from the standard chest. The first is a form of the upper bound sieve and the second concerns the distribution of smooth numbers.

\begin{lem}[see, e.g., {\cite[Theorem 4.2]{HR74}}]\label{sieve linear factors}
 Suppose $a_1,\ldots,a_h$ are positive integers and
$b_1,\ldots,b_h$ are integers such that
$$
E = \prod_{i=1}^h a_i \prod_{1\le i<j\le h} (a_ib_j-a_jb_i) \ne 0.
$$
Then
$$
\#\{ n\le x: a_in+b_i \text{ prime } (1\le i\le h) \}
\ll \frac{x}{(\log x)^h} \prod_{p|E}
\frac{1-\frac{\nu(p)}{p}}{(1-\frac{1}{p})^h}
\ll  \frac{x(\log_2 (|E|+2))^h}{(\log x)^{h}},
$$
where $\nu(p)$ is the number of solutions of the congruence
$\prod (an_i+b_i)\equiv 0\pmod{p}$, and
the implied constant may depend on $h$.
\end{lem}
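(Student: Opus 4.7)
The plan is to derive this estimate from a standard combinatorial upper bound sieve (Brun's pure sieve, the Selberg $\Lambda^2$ sieve, or the Fundamental Lemma all suffice) applied to the sequence $\mathcal{A} = \{\prod_{i=1}^{h}(a_i n+b_i) : n \le x\}$, sifted by primes $p \le z$ for a parameter $z$ chosen at the end.

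First I would identify the local density. The nondegeneracy hypothesis $E\ne 0$ ensures that for any prime $p \nmid E$, each $a_i$ is invertible modulo $p$, so each linear form $a_i n+b_i$ has a unique root, and the $h$ forms are pairwise non-proportional (since $a_ib_j-a_jb_i\not\equiv 0 \pmod p$), so the $h$ roots are distinct and $\nu(p)=h$. For the $O_h(1)$ primes $p\mid E$ only the trivial bound $\nu(p)\le h$ is needed. Next, for any $n$ counted on the left side large enough that each $a_i n+b_i>z$, primality forces $\prod_i(a_i n+b_i)$ to have no prime divisor $\le z$; there are only $O_h(z)$ exceptional $n$. Thus the left side is bounded by $O_h(z)$ plus the sifting function $S(x,z)$. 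A standard upper bound sieve, with sifting level $z=x^{1/s}$ for a sufficiently large constant $s=s(h)$, gives
\[ S(x,z) \ll_h x \prod_{p\le z}\Bigl(1-\frac{\nu(p)}{p}\Bigr), \]
which dominates the $O_h(z)$ exceptional term.

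Next I would factor out $(1-1/p)^h$ and apply Mertens' theorem:
\[ \prod_{p\le z}\Bigl(1-\frac{\nu(p)}{p}\Bigr) = \prod_{p\le z}\Bigl(1-\frac{1}{p}\Bigr)^h \prod_{p\le z}\frac{1-\nu(p)/p}{(1-1/p)^h}. \]
For $p\nmid E$ the second-product factor equals $(1-h/p)/(1-1/p)^h = 1 + O_h(1/p^2)$, so the subproduct over such $p$ converges to a positive constant depending only on $h$; the first product is $\asymp(\log z)^{-h}$. Inserting $\log z\asymp\log x$ then yields the first claimed bound.

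For the cleaner second inequality, I would use $(1-\nu(p)/p)/(1-1/p)^h \le (1-1/p)^{-h}$ together with $-\log(1-1/p)=1/p+O(1/p^2)$ to get
\[ \log\prod_{p\mid E}\frac{1-\nu(p)/p}{(1-1/p)^h} \le h\sum_{p\mid E}\frac{1}{p} + O_h(1). \]
Splitting the sum at $y=\log|E|$, using $\sum_{p\le y}1/p = \log_2 y+O(1)$ for primes $p\le y$ and $\omega(E)/y \ll 1/\log_2|E|$ (via $\omega(E)\ll\log|E|/\log_2|E|$) for primes $p>y$, one obtains $\sum_{p\mid E}1/p \le \log_3(|E|+2)+O(1)$; exponentiating gives the claimed factor $(\log_2(|E|+2))^h$. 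The only real subtlety, were one proving the sieve step from scratch, is verifying the dimension and remainder axioms for $\nu$; since the implied constant may depend on $h$, the Fundamental Lemma applies off the shelf and one need not optimize the sifting constants.
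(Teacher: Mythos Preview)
The paper does not prove this lemma at all; it simply quotes it from Halberstam--Richert \cite[Theorem 4.2]{HR74} as a black box. Your sketch is the standard derivation of that theorem: sift the sequence $\{\prod_i(a_in+b_i):n\le x\}$ by an upper-bound sieve of dimension $h$, extract the local product, and for the second inequality bound $\sum_{p\mid E}1/p$ by splitting at $\log|E|$.

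One slip worth flagging: the number of primes $p\mid E$ is \emph{not} $O_h(1)$ --- it can be as large as $\log|E|/\log_2|E|$, and indeed this is exactly why the second inequality carries the factor $(\log_2(|E|+2))^h$ rather than an absolute constant. Fortunately your argument never actually uses the $O_h(1)$ claim. What you need, and what is true, is that $\nu(p)\le h$ for \emph{every} prime with $\nu(p)<p$: if no $p\mid\gcd(a_i,b_i)$ then $\nu(p)$ counts distinct residues $-b_ia_i^{-1}$ among those $i$ with $p\nmid a_i$, hence $\nu(p)\le h$; if some $p\mid\gcd(a_i,b_i)$ then $\nu(p)=p$, the left-hand count is $O_h(1)$, and the bound is trivial. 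The uniform bound $\nu(p)\le h$ is what verifies the dimension-$h$ axiom and makes the subproduct $\prod_{p\nmid E}(1-h/p)(1-1/p)^{-h}$ converge to a constant depending only on $h$ (note also that any prime $p<h$ automatically divides $E$ by pigeonhole on the residues $-b_ia_i^{-1}$, so the factors $1-h/p$ in that subproduct are all positive). With this correction your outline is sound.
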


Let $\Psi(x,y)$ denote the number of $n\leq x$ for which $P^{+}(n) \leq y$. The following estimate is due to Canfield, Erd\H{o}s, and Pomerance \cite{CEP83}:

\begin{lem}\label{lem:CEP} If $2\leq y \leq x$ and $u = \log{x}/\log{y}$, then
\[ \Psi(x,y) = x/u^{u+o(u)} \]
for $u \leq y^{1-\epsilon}$, as $u\to\infty$.
\end{lem}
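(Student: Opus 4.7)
The bounds aim at $\log(\Psi(x,y)/x) = -u\log u\,(1+o(1))$ from both sides. For the upper bound I would apply Rankin's trick: for any $\sigma\in(0,1)$,
\[
\Psi(x,y) \le x^{\sigma} \sum_{\substack{n\ge 1\\ P^+(n)\le y}} n^{-\sigma} = x^{\sigma}\prod_{p\le y}(1-p^{-\sigma})^{-1}.
\]
Taking logs, expanding $-\log(1-p^{-\sigma}) = p^{-\sigma}+O(p^{-2\sigma})$, and converting the prime sum to $\int_2^y t^{-\sigma}/\log t\,dt$ with a PNT error, I would optimize over $\sigma$. The near-optimal choice is $\sigma = 1-\xi/\log y$, where $\xi = \xi(u)$ is the unique positive root of $e^{\xi} = 1 + u\xi$, so $\xi = \log u + \log\log u + O(1)$. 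Plugging back yields $\log\Psi(x,y) \le \log x - u\log u - u\log\log u + O(u)$, which gives the upper bound.

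For the lower bound I would iterate the Buchstab-type identity
\[
\Psi(x,y) = \Psi(x,z) + \sum_{z<p\le y}\Psi(x/p,p)\qquad(z\le y)
\]
in tandem with the Dickman recursion $u\rho(u)=\int_{u-1}^u \rho(t)\,dt$, starting from the trivial asymptotic $\Psi(x,y)\sim x$ for $u\le 1$ and transferring the estimate $\Psi(x,y)\sim x\rho(u)$ to successively larger values of $u$. At each step PNT contributes a multiplicative factor $1+O(1/\log y)$, so after $O(u)$ iterations the accumulated error is $\exp(O(u/\log y)) = e^{o(u)}$ in the range $u\le y^{1-\eps}$. Since $\rho(u) = u^{-u(1+o(1))}$ as $u\to\infty$, this delivers the matching lower bound $\Psi(x,y)\ge x/u^{u+o(u)}$.

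The main obstacle is uniformity in $u$ as large as $y^{1-\eps}$: the auxiliary parameter $z$ in Buchstab's identity must be tuned so that it stays well inside the PNT regime at every stage, and the extreme range (where $y^{1/u}$ approaches the limit of PNT validity) must be handled separately, for example via a direct saddle-point argument as in the original proof \cite{CEP83}. Everything else is routine given sufficiently sharp PNT estimates.
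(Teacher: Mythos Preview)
The paper does not prove this lemma at all; it is simply quoted with attribution to Canfield--Erd\H{o}s--Pomerance \cite{CEP83} and used as a black box. Your sketch therefore already goes well beyond what the paper supplies.

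On the substance of your outline: the upper bound via Rankin's method with the choice $\sigma = 1-\xi/\log y$, $e^{\xi}=1+u\xi$, is exactly the standard argument and yields the claimed $\log\Psi(x,y)\le \log x - u\log u\,(1+o(1))$ uniformly in the stated range. For the lower bound, the Buchstab-iteration plan you describe is the de~Bruijn route to $\Psi(x,y)\sim x\rho(u)$, which is valid in a restricted range (roughly $u$ up to a power of $\log y$); your error bookkeeping ``multiplicative factor $1+O(1/\log y)$ at each step, hence $\exp(O(u/\log y))$ overall'' is too optimistic, since the recursion replaces $y$ by primes $p\le y$ and the relevant denominator is $\log p$, not $\log y$. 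You correctly flag that the extreme range needs separate handling, but note that \cite{CEP83} do not use a saddle-point argument there: their lower bound is an elementary combinatorial count of products of primes drawn from prescribed subintervals of $[2,y]$, which is precisely what makes the full range $u\le y^{1-\eps}$ accessible. That minor misattribution aside, your plan is sound and the paper offers nothing to compare it against.
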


The next lemma supplies an estimate for how often $\Omega(n)$ is unusually large; this may be deduced from the Theorems in Chapter 0 of \cite{HT88}.

\begin{lem} \label{Omega lem}
The number of integers $n \le x$ for which $\om(n) \ge \a \log_2 x$ is
$$
\ll_\a \begin{cases} x (\log x)^{-Q(\a)} & \text{if $1<\a<2$}, \\
x (\log x)^{1-\a \log 2}\log_2 x & \text{if $\a \ge 2$}, \end{cases}
$$
where $Q(\lambda) = \int_1^{\lambda} \log{t}\, dt = \lambda \log(\lambda)-\lambda+1$.
\end{lem}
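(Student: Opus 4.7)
The plan is a classical Rankin upper-bound tilt. For any real $z>1$, if $\om(n)\ge \a\log_2 x$ then $z^{\om(n)}\ge z^{\a\log_2 x}$, so
\begin{equation*}
\#\{n\le x:\om(n)\ge \a\log_2 x\}\le z^{-\a\log_2 x}\sum_{n\le x}z^{\om(n)}.
\end{equation*}
Thus everything reduces to an estimate for $S_z(x):=\sum_{n\le x}z^{\om(n)}$, followed by a judicious choice of $z=z(\a)$.

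For the estimate on $S_z(x)$, I would appeal to the Selberg--Delange method applied to the Dirichlet series
\begin{equation*}
\sum_{n\ge 1}\frac{z^{\om(n)}}{n^s}=\prod_p\frac{1}{1-zp^{-s}},
\end{equation*}
which, for $1\le z<2$, factors as $\zeta(s)^z G_z(s)$ with $G_z$ holomorphic and bounded in a fixed neighbourhood of $s=1$. A standard contour argument (compare the results in Chapter 0 of \cite{HT88}) then gives
\begin{equation*}
S_z(x)\ll x(\log x)^{z-1},
\end{equation*}
uniformly for $z$ in any compact subset of $[1,2)$, with the implied constant growing no faster than $(2-z)^{-1}$ as $z\uparrow 2$, owing to the resonance between the local Euler factor $(1-z/2^s)^{-1}$ and the pole of $\zeta$ at $s=1$.

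With $S_z(x)$ in hand the argument splits into two regimes. For $1<\a<2$, one takes $z=\a$, which lies strictly inside the good range, and a direct calculation gives $z^{-\a\log_2 x}S_z(x)\ll_\a x(\log x)^{\a-1-\a\log\a}=x(\log x)^{-Q(\a)}$, as required. For $\a\ge 2$ the choice $z=\a$ is unavailable, so I instead tilt with $z=2-1/\log_2 x$; then $z^{-\a\log_2 x}\ll_\a 2^{-\a\log_2 x}=(\log x)^{-\a\log 2}$, while the estimate above with the $(2-z)^{-1}$ dependence yields $S_z(x)\ll \log_2 x\cdot x(\log x)^{1-1/\log_2 x}\ll x(\log x)\log_2 x$, and multiplication delivers the stated bound.

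The main technical subtlety I foresee is control of the implied constant in $S_z(x)\ll x(\log x)^{z-1}$ as $z\uparrow 2$: one must verify that this constant blows up only like $(2-z)^{-1}$, so that an extra factor of just $\log_2 x$ suffices in the second case. This is a routine residue computation once the local factor at $p=2$ is isolated from the rest of the Euler product; everything else amounts to choosing $z$ correctly in each regime and combining the two estimates.
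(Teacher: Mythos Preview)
Your proposal is correct and is precisely the approach underlying the results the paper cites: the paper gives no argument of its own but simply appeals to Chapter~0 of \cite{HT88}, whose theorems are proved exactly by the Rankin tilt $\#\{n\le x:\Omega(n)\ge\alpha\log_2 x\}\le z^{-\alpha\log_2 x}\sum_{n\le x}z^{\Omega(n)}$ together with the estimate $\sum_{n\le x}z^{\Omega(n)}\ll_z x(\log x)^{z-1}$ for $0<z<2$, optimized at $z=\alpha$ when $1<\alpha<2$ and at $z=2-1/\log_2 x$ when $\alpha\ge 2$. Your identification of the $(2-z)^{-1}$ blow-up from the local factor at $p=2$ as the source of the extra $\log_2 x$ in the second regime is exactly right, so there is nothing further to add.
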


We also require a simple estimate for the decay of the Poisson distribution near its left and right tails.

\begin{lem}[see {\cite[Lemma 2.1]{ford98}}] \label{exp partial} If $z>0$ and $0<\alpha<1$, then
$$
\sum_{k\le \alpha z} \frac{z^k}{k!} < \left(\frac{e}{\alpha}\right)^{\alpha z} = e^{(1-Q(\alpha))z},
$$
where $Q(\cdot)$ is defined as in Lemma \ref{Omega lem}.
\end{lem}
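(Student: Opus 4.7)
The plan is to apply the classical exponential-weighting (Chernoff) bound for the left tail of a Poisson distribution of mean $z$. The idea is to introduce a free parameter $t\in(0,1)$ and exploit the trivial observation $t^{k-\alpha z}\ge 1$ valid for $k\le \alpha z$ (the exponent is non-positive and $0<t<1$), in order to majorize the truncated sum by a weighted version of the full exponential series:
\[
\sum_{k\le \alpha z}\frac{z^k}{k!}\;\le\;t^{-\alpha z}\sum_{k\le \alpha z}\frac{(tz)^k}{k!}\;<\;t^{-\alpha z}\sum_{k=0}^{\infty}\frac{(tz)^k}{k!}\;=\;t^{-\alpha z}e^{tz}.
\]
The strict inequality is free: the discarded tail $k>\alpha z$ of the full exponential series contributes at least one strictly positive term $(tz)^{\lfloor\alpha z\rfloor+1}/(\lfloor\alpha z\rfloor+1)!>0$ since $tz>0$.

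It remains to optimize the majorant $t^{-\alpha z}e^{tz}$ over $t\in(0,1)$. Differentiating the logarithm $tz-\alpha z\log t$ with respect to $t$ gives the unique stationary point $t=\alpha$, which is admissible precisely because $0<\alpha<1$. Substituting yields the claimed bound $(e/\alpha)^{\alpha z}$. The stated identity $(e/\alpha)^{\alpha z}=e^{(1-Q(\alpha))z}$ is then a routine one-line manipulation using $\log(e/\alpha)=1-\log\alpha$ together with the definition $Q(\alpha)=\alpha\log\alpha-\alpha+1$, so that $\alpha(1-\log\alpha)=1-Q(\alpha)$.

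There is no genuine obstacle; the entire argument is a short and standard Chernoff bound. The only point that warrants explicit attention is verifying the strict (rather than non-strict) inequality, which is handled by the remark on the positivity of at least one discarded series term.
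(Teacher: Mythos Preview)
Your proof is correct and is essentially the same as the paper's: the paper simply writes $z^k=(\alpha z)^k(1/\alpha)^k$, bounds $(1/\alpha)^k\le(1/\alpha)^{\alpha z}$ for $k\le\alpha z$, and then uses $\sum_{k\le\alpha z}(\alpha z)^k/k!<e^{\alpha z}$, which is exactly your Chernoff argument with the parameter $t=\alpha$ plugged in from the outset rather than found by optimization.
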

\begin{proof} We have
$$
\sum_{k\le \alpha z} \frac{z^k}{k!} =
\sum_{k\le \alpha z} \frac{(\alpha z)^k}{k!} \pfrac{1}{\alpha}^k \le
\pfrac{1}{\alpha}^{\alpha z} \sum_{k \le \alpha z} \frac{(\alpha z)^k}{k!}
< \pfrac{e}{\alpha}^{\alpha z} = e^{(1-Q(\alpha))z}. \qedhere
$$
 \end{proof}

In the remainder of this section, we give a precise meaning to the term ``normal prime'' alluded to in the introduction and draw out some simple consequences. For $S\ge 2$, a prime $p$ is said to be \emph{$S$-normal} if the following two conditions holds for each $f \in \{\phi, \sigma\}$:
\be\label{1S}
\om(f(p),1,S) \le 2\log_2 S,
\ee
and, for every pair of real numbers $(U,T)$ with $S \le U<T\le f(p)$, we have
\be\label{normal}
|\om(f(p),U,T) - (\log_2 T - \log_
2 U)| < \sqrt{\log_2 S \log_2 T}.
\ee
This definition is slightly weaker than the corresponding definition on \cite[p. 13]{ford98}, and so the results from that paper remain valid in our context. As a straightforward consequence of the definition, if $p$ is $S$-normal, $f \in \{\phi, \sigma\}$, and $f(p) \geq S$, then
\begin{equation}\label{eq:normalpomega} \Omega(f(p)) \leq 3 \log_2{f(p)}. \end{equation}

The following lemma is a simple consequence of \cite[Lemma 2.10]{ford98} and \eqref{eq:fordestimate0}:

\begin{lem}\label{LEM210} For each $f \in \{\phi, \sigma\}$, the number of $f$-values $\leq x$ which have a preimage divisible by a prime that is not $S$-normal is
\[ \ll V_f(x) (\log_2{x})^{5} (\log{S})^{-1/6}. \]
\end{lem}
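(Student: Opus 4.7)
The plan is to deduce the lemma by combining \cite[Lemma 2.10]{ford98} with the order-of-magnitude estimate \eqref{eq:fordestimate0}. First, since the definition of $S$-normality adopted here is strictly weaker than the one used on p.~13 of \cite{ford98}, any prime failing \eqref{1S} or \eqref{normal} also fails the corresponding stronger condition in \cite{ford98}. Hence the set of $f$-values $v\le x$ admitting a preimage divisible by a non-$S$-normal prime (in our sense) is contained in the analogous set defined via Ford's stronger notion, and so the cited lemma applies to it verbatim.

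The proof of Ford's Lemma 2.10 proceeds, as I would recap, via a Tur\'an--Kubilius style second-moment computation: for primes $p$ in a dyadic interval one shows that the variance of $\om(f(p),U,T)$ about its mean $\log_2 T-\log_2 U$ is $O(\log_2 T-\log_2 U)$, so that Chebyshev's inequality pins the density of primes $p$ violating \eqref{normal} for some admissible $(U,T)$ at $\ll (\log S)^{-1/3}$. A dyadic decomposition over the possible exceptional prime divisor, together with routine sieve estimates controlling how many $f$-values each such prime can generate, upgrades this to an absolute upper bound of the shape $\ll x(\log_2 x)^{4}/\log x \cdot (\log S)^{-1/6}$ on the count of $f$-values having a non-normal preimage prime.

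Finally, I would re-express this absolute bound in terms of $V_f(x)$ using \eqref{eq:fordestimate0}. That estimate provides $V_f(x)\gg x/\log x$ with an implicit factor that grows more slowly than any positive power of $\log_2 x$; dividing the above bound by $V_f(x)$ therefore produces a ratio of order $(\log_2 x)^{4+o(1)}(\log S)^{-1/6}$, safely inside the quoted $(\log_2 x)^{5}(\log S)^{-1/6}$.

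The substantive technical content — the Tur\'an--Kubilius variance calculation and the sieve bookkeeping that converts a density statement about non-normal primes into a count of $f$-values — is entirely contained in \cite{ford98}; the real ``obstacle'' is therefore not conceptual but simply one of verification, namely checking that our weakening of the normality definition continues to lie within the scope of Ford's argument (immediate from the containment of exceptional sets) and that the polylogarithmic losses introduced by the dyadic decomposition and the $E(x)$-factor in \eqref{eq:fordestimate0} can both be absorbed in a single factor of $(\log_2 x)^{5}$.
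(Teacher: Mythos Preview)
Your proposal is correct and matches the paper's approach exactly: the paper's entire proof is the one-line remark that the lemma is a simple consequence of \cite[Lemma 2.10]{ford98} together with \eqref{eq:fordestimate0}, which is precisely what you do. (One small slip: the factor $Y(x)$ implicit in \eqref{eq:fordestimate0} actually grows \emph{faster} than any fixed power of $\log_2 x$, not slower, but this only strengthens the inequality you need, and in any case the crude bound $V_f(x)\gg x/\log x$ already suffices.)
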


We also record the observation that if $p$ is $S$-normal, then $P^{+}(f(p))$ cannot be too much smaller than $p$, on a double-logarithmic scale.

\begin{lem}\label{lem:length2} If $5 \leq p \leq x$ is an $S$-normal prime and $f(p) \geq S$, then
\[ \frac{\log_2{P^{+}(f(p))}}{\log_2{x}} \geq \frac{\log_2{p}}{\log_2{x}} - \frac{\log_3{x}+\log{4}}{\log_2{x}}. \]
\end{lem}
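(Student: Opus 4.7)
The plan is to produce the desired lower bound on $T := P^{+}(f(p))$ by combining the trivial estimate $f(p) \le T^{\Omega(f(p))}$ (every prime divisor of $f(p)$ is at most $T$) with the bound on $\Omega(f(p))$ guaranteed by the $S$-normality hypothesis. Specifically, since $p$ is $S$-normal and $f(p) \ge S$, inequality \eqref{eq:normalpomega} yields $\Omega(f(p)) \le 3\log_2 f(p)$; substituting into $f(p) \le T^{\Omega(f(p))}$ and taking logarithms twice gives
\[
\log_2 T \;\ge\; \log_2 f(p) - \log_3 f(p) - \log 3.
\]

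It then remains only to transfer this bound from $f(p)$ to $p$ and $x$. I would handle this by noting that $f(p) \in \{p-1, p+1\}$ with $p \ge 5$, whence $p/2 \le f(p) \le 2p \le 2x$; this makes the ``wastage'' terms $\log_2 p - \log_2 f(p)$ and $\log_3 f(p) - \log_3 x$ both small, with signs that can go either way. A short direct check (separating the cases $f(p) = p - 1$ and $f(p) = p+1$, and using $p \ge 5$) shows that
\[
\bigl(\log_2 p - \log_2 f(p)\bigr) + \bigl(\log_3 f(p) - \log_3 x\bigr) \;\le\; \log(4/3).
\]
Combined with the previous display, this produces $\log_2 T \ge \log_2 p - \log_3 x - \log 4$, and dividing by $\log_2 x$ yields the statement of the lemma.

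The only real obstacle is choosing the constant in the conclusion generously enough (hence $\log 4$ rather than $\log 3$) to absorb these small double- and triple-logarithmic discrepancies between $f(p)$, $p$, and $x$ in the final bookkeeping step. The heart of the argument is the one-line chain $f(p) \le P^{+}(f(p))^{\Omega(f(p))} \le P^{+}(f(p))^{3\log_2 f(p)}$ made possible by \eqref{eq:normalpomega}; everything else is elementary manipulation.
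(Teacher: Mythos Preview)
Your proof is correct and follows essentially the same approach as the paper: both argue via $f(p) \le P^{+}(f(p))^{\Omega(f(p))}$ together with $\Omega(f(p)) \le 3\log_2 f(p)$ from \eqref{eq:normalpomega}. The paper simply compresses your bookkeeping into the single chain $P^{+}(f(p)) \ge f(p)^{1/\Omega(f(p))} \ge f(p)^{1/(3\log_2 f(p))} \ge p^{1/(4\log_2 x)}$ before taking the double logarithm, whereas you take double logs first and then absorb the $f(p)\leftrightarrow p$ and $f(p)\leftrightarrow x$ discrepancies explicitly.
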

\begin{proof} We have $P^{+}(f(p)) \geq f(p)^{1/\Omega(f(p))} \geq f(p)^{\frac{1}{3\log_2{f(p)}}} \geq p^{\frac{1}{4\log_2{x}}}$. The result follows upon taking the double logarithm of both sides.
\end{proof}

\subsection{Structural tools}\label{sec:structure} In this section, we describe more fully some components of the structure theory of totients alluded to in the introduction. Given a natural number $n$, write $n = p_0(n) p_1(n) p_2(n) \cdots$, where $p_0(n) \geq p_1(n) \geq p_2(n) \dots$ are the primes dividing $n$ (with multiplicity). For a fixed $x$, we put
\[ x_i(n; x) =
 \begin{cases} \frac{\log_2{p_i(n)}}{\log_2{x}} &\text{if $i < \Omega(n)$ and $p_i(n) > 2$}, \\
 0 &\text{if $i \geq \Omega(n)$ or $p_i(n)=2$}.
\end{cases}
\]
Suppose $L \geq 2$ is fixed and that $\xi_i\ge 0$ for $0\le i\le L-1$. Recall the definition of the $a_i$ from \eqref{eq:somedefs} and
let $\fancyS_L(\bxi)$ be the set of $(x_1, \ldots, x_L) \in\RR^L$ with $0 \leq x_L \leq x_{L-1} \leq \dots \leq x_1 \leq 1$ and
\begin{align*}
(I_0) & &a_1x_1 + a_2x_2 + \cdots + a_L x_L &\le \xi_0, \hfill \\
(I_1) & &a_1x_2 + a_2x_3 + \cdots + a_{L-1}x_L &\le \xi_1 x_1, \\
\vdots\;\;\; && &\vdots\\
(I_{L-2}) & &a_1 x_{L-1} + a_2 x_{L} &\le \xi_{L-2} x_{L-2}.
\end{align*}
Define $T_L(\bxi)$ as the volume ($L$-dimensional Lebesgue measure) of $\fancyS_L(\bxi)$. For convenience, let $\vone=(1,1,\ldots,1)$,
$\fancyS_L = \fancyS_L(\vone)$ (the
``fundamental simplex''), and let $T_L$  be the volume of $\fancyS_L$. Let
\[ L_0(x):= \lfloor 2C(\log_3{x}-\log_4{x})\rfloor, \]
where $C$ is defined as in the introduction. The next lemma, which appears as \cite[Theorem 15]{ford98}, allows us to locate the preimages of almost all $f$-values within suitable sets of the form $\fancyS_L(\vxi)$.
\begin{lem}\label{lem:simplex}
Suppose $0 \leq \Psi < L_0(x)$, $L= L_0-\Psi$, and let
\[ \xi_i = \xi_i(x) = 1 + \frac{1}{10(L_0-i)^3} \qquad (0 \leq i \leq L-2). \]
The number of $f$-values $v \leq x$ with a preimage $n$ for which
$(x_1(n; x), \dots, x_L(n;x)) \not\in \fancyS_L(\vxi)$
is
\[ \ll V_f(x) \exp(-\Psi^2/4C). \]
\end{lem}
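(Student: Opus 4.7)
The plan is to follow the structure-theoretic argument of \cite{ford98}, of which this is Theorem 15: exploit that preimages of typical $f$-values are built from $S$-normal primes (whose prime-factor distributions are tightly controlled), and then show that forcing any single inequality $(I_j)$ to fail confines the resulting $f$-values to a set of density $\ll \exp(-\Psi^2/4C)$.

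First, I would apply Lemma \ref{LEM210} with $S$ a suitable intermediate quantity (calibrated to the size of $\Psi$) to discard, at negligible cost, all $f$-values admitting a preimage divisible by a non-$S$-normal prime. We may thereafter assume every prime divisor of $n$ is $S$-normal, so that the normality bounds \eqref{1S}, \eqref{normal}, and \eqref{eq:normalpomega} apply uniformly to each factor $f(p_i(n))$.

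Next, for each remaining bad preimage $n$, let $j = j(n) \in \{0,\ldots,L-2\}$ be the least index at which $(I_j)$ fails. It suffices to bound, for each fixed $j$, the number of $f$-values so arising by $\ll V_f(x)\exp(-\Psi^2/4C)(L_0-j)^{-2}$, which then sums in $j$ to the stated bound. Here the coefficients $a_k$ from \eqref{eq:somedefs} carry the following meaning: for an $S$-normal prime $p$ at double-logarithmic level $\lambda$, the expected contribution of $f(p)$ to the level-$k$ prime factors of $f(n)$ has weight $a_k\lambda$ (this is essentially the analytic content of Mertens-type sums on shifted primes, and explains why $a_k = (k+1)\log(k+1) - k\log k - 1$ is natural). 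Thus $(I_j)$ says that the aggregate contribution of $f(p_j(n)), f(p_{j+1}(n)), \ldots$ to the lower levels is not abnormally concentrated, while the slack $\xi_j - 1 = (10(L_0-j)^3)^{-1}$ ensures that its failure is a genuine quantitative deviation.

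Finally, the counting step is a Maier--Pomerance iterative sieve of the type encoded in Lemma \ref{lem51}: one sieves for the tuple $(p_j(n), p_{j+1}(n),\ldots, p_{L-1}(n))$ subject to the failing inequality, summing over admissible choices of the earlier primes and tiny factors via Lemma \ref{sieve linear factors}. The resulting multi-dimensional volume integral, through Mellin-type asymptotics grounded in the identity $F(\rho)=1$, converts the volume gap between $\fancyS_L(\vone)$ and the failing region into the factor $\exp(-\Psi^2/4C)$, where $C = 1/(2|\log\rho|)$. The main obstacle is this sieve step: one must propagate upper-bound sieve estimates through many scales while tracking uniformity in $j$ and the perturbations $\xi_j$, and while ensuring sieve error terms do not dominate the main term. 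This propagation is the delicate technical core of \cite[\S5]{ford98} and of the later Lemma \ref{lem51}.
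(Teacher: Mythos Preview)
The paper does not give its own proof of this lemma: it is simply quoted as \cite[Theorem 15]{ford98}, with no argument supplied. So there is nothing in the paper to compare your proposal against beyond that citation, which you correctly identify.

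As a sketch of how the proof in \cite{ford98} actually runs, your outline is in the right spirit (discard non-$S$-normal preimages, then for each failing index $j$ bound the contribution and sum), but two points deserve correction. First, your appeal to Lemma~\ref{lem51} is misplaced: that lemma is developed later in \emph{this} paper specifically for the mixed equation $\phi(a)=\sigma(a')$, and is not the tool used in \cite{ford98}; the relevant machinery there is the upper-bound argument of \cite[\S4]{ford98} (cf.\ also Lemmas~\ref{lem:lem42}, \ref{lem:RLbound1}, \ref{lem:volbound} here), which converts the failing inequality $(I_j)$ directly into a volume loss via the simplex estimates rather than through a shifted-prime sieve of Maier--Pomerance type. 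Second, your heuristic for the coefficients $a_k$ (``expected contribution of $f(p)$ to the level-$k$ prime factors'') is not quite the mechanism; the $a_k$ arise from the change of variables and Abel summation that transforms the multiplicative sum $\sum 1/f(n)$ over $n$ with prescribed $x_i$ into an iterated integral over the simplex, and the factor $\exp(-\Psi^2/4C)$ comes out of the asymptotic for $T_L$ when $L=L_0-\Psi$ (Lemma~\ref{lem:volbound}), not from a separate Mellin calculation. The ``delicate technical core'' you anticipate is thus the simplex volume analysis of \cite[\S3]{ford98}, not the sieve of \cite[\S5]{ford98}.
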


For future use, we collect here some further structural lemmas from \cite{ford98}. The next result, which follows immediately from \eqref{eq:fordestimate0} and \cite[Lemma 4.2]{ford98}, concerns the size of sums of the shape appearing in the definition of inequality ($I_0$) above.

\begin{lem}\label{lem:lem42} Suppose $L \geq 2$, $0 < \omega < 1/10$, and $x$ is sufficiently large. The number of $f$-values $v \leq x$ with a preimage satisfying
\[ a_1 x_1(n; x) + \dots + a_L x_L(n; x) \geq 1 + \omega \]
is \[ \ll V_f(x) (\log_2{x})^5 (\log{x})^{-\omega^2/(150 L^3 \log{L})}.\]
\end{lem}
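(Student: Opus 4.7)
The proof plan is essentially a bookkeeping exercise: combine Ford's Lemma 4.2 from \cite{ford98} with the order-of-magnitude estimate \eqref{eq:fordestimate0}. First, I would invoke Lemma 4.2 of \cite{ford98}, which directly bounds the number of integers $n \leq x$ (counted with multiplicity through their role as preimages of $f$-values) satisfying the linear constraint $a_1 x_1(n;x) + \dots + a_L x_L(n;x) \geq 1 + \omega$ by a quantity of the shape $\frac{x}{\log x} (\log_2 x)^{O(1)} (\log x)^{-\omega^2/(150 L^3 \log L)}$. The exponential decay in $\omega^2$ is a Cramér-type large-deviation estimate: on a double-logarithmic scale, the prime factors of a typical $n$ are approximately uniformly distributed in $[0,\log_2 x]$, and Ford's Lemma 4.2 quantifies how unlikely it is for the weighted combination $\sum a_i x_i$ to exceed its ``typical'' value of $1$ by a margin $\omega$. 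The denominator $150 L^3 \log L$ reflects how this deviation weakens as $L$ grows, since allowing more prime factors gives more flexibility for the linear combination to be large.

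Second, I would apply \eqref{eq:fordestimate0}, which gives
\[ V_f(x) \asymp \frac{x}{\log x} \exp\bigl(C(\log_3 x - \log_4 x)^2 + D\log_3 x - (D + 1/2 - 2C)\log_4 x\bigr). \]
In particular $V_f(x) \gg x/(\log x)^{1+o(1)}$, and more precisely $V_f(x) \gg \frac{x}{\log x} (\log_2 x)^{-5}$ for large $x$ (with room to spare, once one expands the secondary exponential factor in \eqref{eq:fordestimate0} and absorbs it into $(\log_2 x)^5$). Dividing the bound from Ford's Lemma 4.2 through by this lower bound for $V_f(x)$ absorbs the leading $\frac{x}{\log x}$ into $V_f(x)$ and trades the remaining polylogarithmic overhead for the factor $(\log_2 x)^5$ in the statement, leaving the decay factor $(\log x)^{-\omega^2/(150 L^3 \log L)}$ intact.

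The only real point to verify is that the polylogarithmic overheads in Ford's Lemma 4.2 and the secondary terms in \eqref{eq:fordestimate0} together do not exceed $(\log_2 x)^5$; this is a routine check against the explicit statement in \cite{ford98}. Since the present Lemma \ref{lem:lem42} is essentially a reformulation of the cited result in the notation of the current paper, there is no substantive obstacle to the argument — the bulk of the work was already done in establishing Ford's Lemma 4.2 via the iterative sieve machinery of Maier and Pomerance adapted in \cite[\S4]{ford98}.
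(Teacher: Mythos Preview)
Your proposal is correct and matches the paper's approach exactly: the paper states that Lemma~\ref{lem:lem42} ``follows immediately from \eqref{eq:fordestimate0} and \cite[Lemma 4.2]{ford98}'' and gives no further argument. Your write-up simply unpacks this citation, which is all that is required.
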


We will make heavy use of the following (purely geometric) statement about the simplices $\fancyS_L(\vxi)$; it appears as \cite[Lemma 3.10]{ford98}.

\begin{lem}\label{lem:xcomparison} If $\vx \in \fancyS_L(\bxi)$ and $\xi_0^{L}\xi_1^{L-1}\cdots \xi_{L-2}^2 \leq 1.1$, then $x_j \leq 3\rho^{j-i} x_i$ when $i < j$, and $x_j < 3\rho^j$ for $1 \leq j \leq L$.
\end{lem}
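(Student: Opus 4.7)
The plan is to reduce the lemma to its $i=0$ case and then prove the resulting bound by rescaling together with the identity $F(\rho)=1$.

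\textbf{Reduction to $i=0$.} First I would observe that if $\vx\in\fancyS_L(\bxi)$ and $x_i>0$, then the rescaled vector $(x_{i+1}/x_i,\dots,x_L/x_i)$ lies in the shifted simplex $\fancyS_{L-i}(\xi_i,\dots,\xi_{L-2})$. Indeed, dividing $(I_{i+k})$ by $x_i$ produces the $k$-th inequality of the smaller simplex, and monotonicity is preserved. The product hypothesis is inherited: $\prod_{k=i}^{L-2}\xi_k^{L-k}$ is a subproduct of $\prod_{k=0}^{L-2}\xi_k^{L-k}\le 1.1$, and every $\xi_k\ge 1$. Once the bound $x_j<3\rho^j$ is established for the simplex in its $i=0$ form, the general bound $x_j\le 3\rho^{j-i}x_i$ follows by applying that case to the rescaled vector (the case $x_i=0$ is vacuous, since then $x_j=0$ for all $j>i$ by monotonicity).

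\textbf{Renormalization.} I would next pass to the variables $y_j:=x_j\rho^{-j}$ and coefficients $b_n:=a_n\rho^n$. The identity $F(\rho)=1$ becomes $\sum_{n\ge 1}b_n=1$; the inequalities $(I_i)$ read
\[
\sum_{n=1}^{L-i} b_n\, y_{i+n}\le \xi_i\, y_i\qquad (0\le i\le L-2),
\]
and the goal is $y_j<3$ for $1\le j\le L$ (with $y_0=1$). The case $j=1$ is immediate since monotonicity of $\vx$ yields $y_1=x_1/\rho\le 1/\rho<2$. The heuristic behind $\rho$ is now transparent: the inequality says that each $y_i$ dominates a convex combination of the subsequent $y$'s (weights $b_n$ summing to $1$), up to a small perturbation $\xi_i$, which is precisely the condition that forces the $y_j$ to stay bounded.

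\textbf{Quantitative estimate.} To extract the numerical constant $3$, I would form the weighted combination of $(I_0),(I_1),\dots,(I_{L-2})$ with weights $\rho^{-i}$. Swapping the order of summation and applying the identity $\sum_{k=1}^j b_k=1-\delta_j$, where $\delta_j:=\sum_{k>j}b_k$ is geometrically small, one obtains after cancellation an inequality of the schematic form
\[
c_1\, y_{L-1}+c_2\, y_L \;\le\; \xi_0 + \sum_{j=1}^{L-2}\bigl(\xi_j-1+\delta_j\bigr)\, y_j,
\]
with absolute constants $c_1,c_2>0$. Applying the same manipulation to every shifted subsimplex yields a family of analogous inequalities linking the $y_j$. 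An iterated (discrete Gr\"onwall-type) estimate then gives $\max_j y_j<3$, provided the cumulative perturbation $\sum_{j=0}^{L-2}(\xi_j-1)(L-j)\le \log\prod_j\xi_j^{L-j}\le \log 1.1$ stays small, which is exactly the hypothesis.

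\textbf{Main obstacle.} The delicate part is that the constant $3$ is essentially tight: even for the unperturbed fundamental simplex (all $\xi_j=1$), truncation at $n=L-i$ already permits $y_j$ to come close to $3$. Careful bookkeeping of the geometric tails $\delta_j$ and the cumulative perturbations $\xi_j-1$ is required to show that they consolidate into the single scalar $\prod\xi_j^{L-j}$. Arranging the weighting scheme so that the intermediate coefficients have the correct signs, and so that the $O(\delta_j)$ remainders telescope rather than accumulate, is the main technical hurdle.
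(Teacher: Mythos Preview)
The paper does not actually prove this lemma: it is quoted verbatim as \cite[Lemma 3.10]{ford98} with no argument given. So there is no in-paper proof to compare your proposal against; any assessment has to be of your sketch on its own merits.

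Your reduction to the $i=0$ case and the substitution $y_j=x_j\rho^{-j}$, $b_n=a_n\rho^n$ are both correct and are exactly the natural moves (and are in fact how Ford's original proof is organized). One small point: in the reduction you invoke ``every $\xi_k\ge 1$'' to pass from the full product hypothesis to the subproduct for the shifted simplex, but the lemma as stated only assumes $\xi_k\ge 0$. In every application in this paper one does have $\xi_k\ge 1$, so this is harmless here, but you should flag the extra assumption.

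The real gap is in your ``Quantitative estimate'' section. You correctly derive the telescoped inequality bounding $c_1 y_{L-1}+c_2 y_L$ by $\xi_0+\sum_{j\le L-2}(\xi_j-1+\delta_j)y_j$, and you say that iterating over shifted subsimplices and running a discrete Gr\"onwall argument yields $\max_j y_j<3$. But you never carry this out, and you explicitly label the bookkeeping of signs and tails as ``the main technical hurdle.'' That hurdle is the entire content of the lemma: the reduction and renormalization are routine, while extracting the uniform constant $3$ from the perturbed recursion, with the perturbations controlled only through $\prod\xi_j^{L-j}\le 1.1$, is where all the work lies. As written, your proposal identifies the right framework but stops short of a proof; to complete it you would need to make the inductive step explicit (for instance, bound $\max_{j\le m} y_j$ in terms of $\max_{j\le m-1} y_j$ with a multiplicative factor whose product over $m$ is controlled by $\prod\xi_j^{L-j}$) and verify numerically that the resulting bound stays below $3$.
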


Define $\fancyR_L(\vxi; x)$ as the set of integers $n$ with $\Omega(n) \leq L$ and
\[ (x_0(n;x), x_1(n;x), \dots, x_{L-1}(n;x)) \in \fancyS_L(\vxi). \]
For $f \in \{\phi,\sigma\}$, put
\[ R_L^{(f)}(\vxi; x) = \sum_{n \in \fancyR_L(\vxi, x)} \frac{1}{f(n)}. \]
The next lemma, extracted from \cite[Lemma 3.12]{ford98}, relates the magnitude of $R_L^{(f)}(\vxi; x)$ to the volume of the fundamental simplex $T_L$, whenever $\bxi$ is suitably close to $\vone$. In \cite{ford98}, it plays a crucial role in the proof of the upper-bound aspect of \eqref{eq:fordestimate0}.

\begin{lem}\label{lem:RLbound1} If $1/(1000k^3) \leq \omega_{L_0-k} \leq 1/(10k^3)$ for $1 \leq k \leq L_0$, $\xi_i = 1 + \omega_i$ for each $i$, and $L \leq L_0$, then
\[ R_L^{(f)}(\vxi; x) \ll (\log_2{x})^L T_L \]
for both $f=\phi$ and $f=\sigma$.
\end{lem}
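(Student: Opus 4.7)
My plan is to adapt the proof of [ford98, Lemma 3.12], where the analogous result is established for $f=\phi$. The extension to $f=\sigma$ requires only the estimate $f(p) = p + O(1)$, valid for both. The argument has two phases: an analytic reduction of $R_L^{(f)}$ to a geometric volume via iterated Mertens, and a comparison of that volume with $T_L$.

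First, I decompose
\[ R_L^{(f)}(\vxi;x) = \sum_{k=0}^{L} \sum_{\substack{n \in \fancyR_L(\vxi;x)\\ \Omega(n)=k}} \frac{1}{f(n)}. \]
For each $k$, the main contribution comes from squarefree $n = p_0 > p_1 > \cdots > p_{k-1}$; the non-squarefree contribution is controlled by passing to the squarefree kernel and absorbing a factor $\sum_{p}1/p^{2} = O(1)$ per repeated prime factor, which loses one Mertens factor $\log_2 x$ and is at most $O((\log_2 x)^{L-1} T_L)$, hence absorbed in the target bound. For squarefree $n$, multiplicativity gives $f(n) = \prod_{i<k}(p_i + \epsilon_f)$ with $\epsilon_\phi = -1$, $\epsilon_\sigma = +1$, so $1/f(n) = \prod_{i<k}(1+O(1/p_i))/p_i$, and the error factors contribute only lower-order tails.

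Second, I apply Mertens' theorem iteratively from the innermost prime $p_{k-1}$ outward: each summation $\sum_{P^- < p \le P^+} 1/p$ contributes $\log_2 P^+ - \log_2 P^- + O(1/\log P^-)$. Under the rescaling $x_i = \log_2 p_i/\log_2 x$, each summation becomes $\log_2 x$ times an integral, yielding
\[ \sum_{\substack{n \text{ squarefree}, \Omega(n)=k\\ n \in \fancyR_L(\vxi;x)}} \frac{1}{f(n)} \ll (\log_2 x)^k \cdot V_k^{(L)}(\vxi), \]
where $V_k^{(L)}(\vxi)$ denotes the $k$-dimensional Lebesgue volume of the slice $\fancyS_L(\vxi) \cap \{x_{k+1} = \cdots = x_L = 0\}$.

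Third, inspection of the defining inequalities of $\fancyS_L(\vxi)$ when the trailing coordinates vanish identifies the slice with $\fancyS_k(\xi_0, \xi_1, \ldots, \xi_{k-2})$, so $V_k^{(L)}(\vxi) \asymp T_k$ by the hypothesis $\omega_{L_0 - k} \le 1/(10 k^3)$ (which forces $\vxi$ close to $\vone$). Using Ford's asymptotic $T_{k+1}/T_k \asymp 1/((k+1) a_{k+1})$ together with $a_{k+1} \asymp \log k$, the ratio
\[ \frac{(\log_2 x)^{k+1} T_{k+1}}{(\log_2 x)^{k} T_{k}} \asymp \frac{\log_2 x}{(k+1) \log(k+1)} \gg 1 \]
holds uniformly for $k \le L \le L_0 \sim 2C(\log_3 x - \log_4 x)$, since $(k+1)\log(k+1) \ll L\log L = o(\log_2 x)$. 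Thus $(\log_2 x)^k T_k$ grows geometrically in $k$ across the relevant range, so the sum over $0 \le k \le L$ is dominated by its top term, yielding $R_L^{(f)}(\vxi;x) \ll (\log_2 x)^L T_L$.

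The main obstacle is this third step — the geometric identification of the slice volume with $T_k(\xi_0, \ldots, \xi_{k-2})$, the quantitative comparison of the latter with $T_k$, and the geometric monotonicity of $(\log_2 x)^k T_k$. The passage from $\phi$ to $\sigma$ is essentially free, since the Mertens reduction only uses $f(p) = p + O(1)$, which holds uniformly for both.
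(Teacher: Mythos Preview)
Your sketch is sound and follows the line of \cite[Lemma 3.12]{ford98}: decompose by $\Omega(n)=k$, reduce the prime sums to integrals via Mertens, bound the resulting $k$-dimensional slice volume by $T_k(\xi_0,\dots,\xi_{k-2})\ll T_k$, and then use the growth of $(\log_2 x)^kT_k$ in $k\le L_0$ to collapse the sum to its top term. One minor correction: the slice $\fancyS_L(\vxi)\cap\{x_{k+1}=\cdots=x_L=0\}$ is contained in, not equal to, $\fancyS_k(\xi_0,\dots,\xi_{k-2})$ (the inequality $(I_{k-1})$ survives as $a_1x_k\le\xi_{k-1}x_{k-1}$), but containment is all you need for the upper bound.

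Where you diverge from the paper is in the passage from $\phi$ to $\sigma$. The paper does not re-run any of the analytic argument for $\sigma$; it simply cites \cite[Lemma 3.12]{ford98} for $f=\phi$ and then observes that $\sigma(n)\ge\phi(n)$ for all $n$, whence
\[
R_L^{(\sigma)}(\vxi;x)=\sum_{n\in\fancyR_L(\vxi,x)}\frac{1}{\sigma(n)}\le\sum_{n\in\fancyR_L(\vxi,x)}\frac{1}{\phi(n)}=R_L^{(\phi)}(\vxi;x),
\]
which immediately gives the $\sigma$-case from the $\phi$-case. Your route through $f(p)=p+O(1)$ is valid and has the virtue of being self-contained (it would extend to any multiplicative $f$ with $f(p)\sim p$), but the paper's one-line monotonicity argument is considerably more economical here.
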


While only the case $f=\phi$ of Lemma \ref{lem:RLbound1} appears in the statement of \cite[Lemma 3.12]{ford98}, the $f=\sigma$ case follows trivially, since $\sigma(n)\geq \phi(n)$. In order to apply Lemma \ref{lem:RLbound1}, we need estimates for the volume $T_L$; this is handled by the next lemma, extracted from \cite[Corollary 3.4]{ford98}.

\begin{lem}\label{lem:volbound} Assume $1 \leq \xi_i \leq 1.1$ for $0 \leq i \leq L-2$ and that $\xi_0^L \xi_1^{L-1} \cdots \xi_{L-2}^{2} \asymp 1$. If $L= L_0 - \Psi > 0$, then
\[ (\log_2{x})^{L} T_L(\vxi) \ll Y(x) \exp(-\Psi^2/4C). \]
Here
\begin{equation}\label{eq:ydef} Y(x):= \exp(C(\log_3{x}-\log_4{x})^2 + D\log_3{x} -(D+1/2-2C)\log_4{x}).\end{equation}
\end{lem}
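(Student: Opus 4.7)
The plan is to reduce to the standard (unperturbed) simplex $\fancyS_L = \fancyS_L(\vone)$ and then invoke the asymptotic analysis of its volume developed in \cite{ford98}.

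First I would handle the reduction $T_L(\vxi) \ll T_L$. Since each $\xi_i \in [1, 1.1]$ is only a small perturbation from $1$, a natural approach is a linear change of variables $y_i = x_i / r_i$, where the weights $r_i \geq 1$ are chosen recursively so that each inequality $(I_k)$ of $\fancyS_L(\vxi)$, after substitution, is implied by the corresponding inequality $a_1 y_{k+1} + a_2 y_{k+2} + \dots + a_{L-k} y_L \le y_k$ of the standard simplex $\fancyS_L$. The Jacobian of this map is $\prod_i r_i$, and bookkeeping shows that this product is controlled by $\xi_0^L \xi_1^{L-1} \cdots \xi_{L-2}^2$. The hypothesis that this quantity is $\asymp 1$ then yields $T_L(\vxi) \ll T_L$.

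Next I would estimate $T_L$ itself. This is where the deeper work sits: writing $T_L$ as an iterated integral, one extracts via the generating-function identity $F(\rho)=1$ a bound of the form
\[ T_L \ll \frac{Y(x)}{(\log_2 x)^L} \exp(-\Psi^2/4C), \]
at least when $L = L_0 - \Psi$ with $L_0 = \lfloor 2C(\log_3 x - \log_4 x)\rfloor$. The constants $C = 1/(2|\log \rho|)$ and $D$ appear naturally in the asymptotic because $\rho$ is the critical point of the generating function and because $L_0$ is (essentially) the value of $L$ that maximizes the product $(\log_2 x)^L T_L$; any deviation by $\Psi$ from this optimum costs a Gaussian factor $\exp(-\Psi^2/4C)$ after completing the square. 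This step is essentially the content of \cite[Corollary 3.4]{ford98}, which I would simply cite rather than reproduce.

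Finally, multiplying the bound from the second step by $(\log_2 x)^L$ and combining with the comparison $T_L(\vxi) \ll T_L$ from the first step gives the claimed inequality. The main obstacle in a fully self-contained proof is step two; the analytic evaluation of the volume of the fundamental simplex is the technical core of \cite{ford98}, drawing on the delicate interplay between the sequence $(a_n)$ and the unique solution $\rho$ of $F(\rho)=1$. The reduction to $\vxi = \vone$ and the final substitution $L = L_0 - \Psi$ are comparatively routine once the asymptotic for $T_L$ is granted.
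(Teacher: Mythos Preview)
Your proposal is correct and matches the paper's treatment: the paper does not give an independent proof but simply extracts the statement from \cite[Corollary~3.4]{ford98}, which already covers the perturbed simplex $\fancyS_L(\vxi)$ under the stated hypotheses on the $\xi_i$. Your two-step breakdown (scale to reduce $T_L(\vxi)$ to $T_L$, then invoke the asymptotic for $T_L$) is an accurate sketch of what underlies that corollary, though in \cite{ford98} the perturbed case is handled in one stroke rather than carved out as a separate reduction.
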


We conclude this section with the following technical lemma, which will be needed when we select the sets $\A_{\phi}$ and $\A_{\sigma}$ in \S\ref{sec:afdef}.
\begin{lem}\label{lem:smoothness} For $f \in \{\phi, \sigma\}$ and $y\geq 20$,
\begin{equation}\label{eq:totientrecip} \sum_{\substack{v \in \V_f \\ P^{+}(v) \leq y}}\frac{1}{v} \ll \frac{\log_2{y}}{\log_3{y}} Y(y),\end{equation}
where $Y$ is as defined in \eqref{eq:ydef}. Moreover, for any $b > 0$,
\begin{equation}\label{eq:Yexpbounds}
 Y(\exp((\log{x})^{b})) \ll_{b} Y(x) \left(\frac{\log_3{x}}{\log_2{x}}\right)^{-2C\log{b}}.\end{equation}
\end{lem}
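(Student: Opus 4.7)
\textit{Proof plan.}
For \eqref{eq:Yexpbounds}, I expand the iterated logarithms of $z := \exp((\log x)^b)$: one has $\log_2 z = b\log_2 x$, $\log_3 z = \log_3 x + \log b$, and $\log_4 z = \log_4 x + O_b((\log_3 x)^{-1})$. Squaring,
\[
(\log_3 z - \log_4 z)^2 = (\log_3 x - \log_4 x)^2 + 2(\log b)(\log_3 x - \log_4 x) + O_b(1),
\]
while the linear contributions $D\log_3 z - (D+1/2-2C)\log_4 z$ agree with those at $x$ up to $O_b(1)$. Hence $\log Y(z) - \log Y(x) = 2C(\log b)(\log_3 x - \log_4 x) + O_b(1)$, and the identity $\log_3 x - \log_4 x = \log(\log_2 x/\log_3 x)$ converts the exponentiated correction into $(\log_3 x/\log_2 x)^{-2C\log b}$, as required.

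For \eqref{eq:totientrecip}, partial summation yields
\[
\sum_{\substack{v \in \V_f \\ P^+(v) \leq y}} \frac{1}{v} = \int_1^\infty \frac{V_f^*(t,y)}{t^2}\,dt, \qquad V_f^*(t,y) := \#\{v \in \V_f : v \leq t,\ P^+(v) \leq y\},
\]
with $V_f^*(t,y) \leq \min\!\bigl(V_f(t), \Psi(t,y)\bigr)$. On $[1,y]$ the smoothness constraint is vacuous, so $V_f^*(t,y) = V_f(t) \ll tY(t)/\log t$ by \eqref{eq:fordestimate0}. The substitution $u = \log_2 t$ transforms the corresponding integral into $\int^{\log_2 y} Y(e^{e^u})\,du$. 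Writing $G(u) := \log Y(e^{e^u})$, a direct computation gives $G'(u) \sim 2C\log u/u$, so $G$ is slowly increasing, and the Laplace-type estimate $\int^U e^{G(u)}\,du \asymp e^{G(U)}/G'(U)$ yields a contribution $\asymp Y(y)\log_2 y/\log_3 y$, matching the target order.

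The tail $t > y$ is the principal obstacle. Splitting at $t_0 := y^{1 + c/\log_3 y}$ for a suitable $c > 0$, the bound \eqref{eq:Yexpbounds} guarantees $Y(t_0) \asymp Y(y)$, so that $\int_y^{t_0} V_f(t)/t^2\,dt \ll Y(y)\log(t_0/y)^{1/\log y} \ll Y(y)/\log_3 y$, well within the target. For $t > t_0$, however, the naive bound $V_f^*(t,y) \leq \Psi(t,y) \ll t\rho(\log t/\log y)$ combined with the substitution $t = y^u$ only gives $\ll \log y \cdot \int_{u_0}^\infty \rho(u)\,du = O(\log y)$, far exceeding the target $Y(y)\log_2 y/\log_3 y = \exp(C(\log_3 y)^2 + O(\log_3 y))$. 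To close this gap I would exploit the additional structural constraint that a $y$-smooth value $v = f(n) \in \V_f$ forces every prime $p > y$ dividing $n$ to lie in the thin set $\mathcal{P}^+(y) := \{p \text{ prime} : P^+(p \mp 1) \leq y\}$; Pomerance's bound $|\mathcal{P}^+(y) \cap [1,z]| \ll \pi(z)\rho(\log z/\log y)^{1+o(1)}$ on smooth shifted primes then refines the $\Psi$-estimate for $V_f^*(t,y)$ enough to force the tail contribution into $O(Y(y)\log_2 y/\log_3 y)$.
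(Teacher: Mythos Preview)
Your treatment of \eqref{eq:Yexpbounds} is correct and matches the paper's direct computation.

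For \eqref{eq:totientrecip}, there is a genuine gap: your tail estimate for $t > t_0$ is not completed, and the proposed appeal to smooth shifted primes is only a sketch. More importantly, this complication is unnecessary. The key observation you are missing is that the split point should be pushed much further out, to $X := y^{\log_2 y}$ rather than near $y$. For $t > X$ one has $u = \log t/\log y > \log_2 y$, and then Lemma~\ref{lem:CEP} already gives $\Psi(t,y) \ll t/(\log t)^2$; partial summation shows the tail contributes $O(1)$, which is negligible. For $v \leq X$ one simply drops the smoothness constraint and bounds the sum by $\sum_{v \in \V_f,\, v \leq X} 1/v$. Your own Laplace-type estimate then gives this as $\asymp Y(X)\log_2 X/\log_3 X$. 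The crucial point is that $\log_2 X = \log_2 y + \log_3 y$, so $\log_3 X \sim \log_3 y$, $\log_4 X \sim \log_4 y$, and hence $Y(X) \sim Y(y)$; thus the head contributes $\asymp Y(y)\log_2 y/\log_3 y$ as required. No structural information about $f$-values beyond $V_f(t) \asymp tY(t)/\log t$ is needed.
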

\begin{proof} We split the left-hand sum in \eqref{eq:totientrecip} according to whether or
 not $v \leq y^{\log_2{y}}$. The contribution of the large $v$ is $O(1)$ and so is negligible: 
Indeed, for $t > y^{\log_2{y}}$, we have $\frac{\log{t}}{\log{y}} > \log_2{y}$. 
Thus, by Lemma \ref{lem:CEP}, we have $\Psi(t,y) \ll t/(\log{t})^2$ (say), and the 
$O(1)$ bound follows by partial summation. We estimate the sum over small $v$ by ignoring the smoothness condition. Put $X=y^{\log_2{y}}$. Since $V_f(t) \asymp \frac{t}{\log{t}} Y(t)$, partial summation gives that
\[ \sum_{\substack{v \in \V_f \\ v \leq X }}\frac{1}{v} \ll 1 + 
\int_{3}^{X}\frac{Y(t)}{t\log{t}}\, dt = (1+o(1)) Y(X) \frac{\log_2{X}}{\log_3{X}}, \]
as $y \to \infty$. (The last equality follows, e.g., from L'H\^{o}pital's rule.) Since $\log_2{X}/\log_3{X} \sim \log_2{y}/\log_3{y}$ and $Y(X) \sim Y(y)$, we have \eqref{eq:totientrecip}. Estimate \eqref{eq:Yexpbounds} follows from the definition of $Y$ and a direct computation; here it is helpful to note that if we redefine $X:=\exp((\log{x})^b)$, then $\log_3{X} = \log_3{x} + \log{b}$ and $\log_4{X} = \log_4{x} + O_b(1/\log_3{x})$.
\end{proof}

\section{Definition of the sets $\A_{\phi}$ and $\A_{\sigma}$}\label{sec:afdef}
We continue fleshing out the introductory sketch, giving precise definitions to the preimage sets $\A_{\phi}$ and $\A_{\sigma}$.
Put
\begin{equation}\label{eq:Ldef} L:=\lfloor L_0(x)-2\sqrt{\log_3{x}}\rfloor, \quad \xi_i:= 1 + \frac{1}{10(L_0-i)^3} \quad (1\leq i \leq L). \end{equation}

The next lemma is a final technical preliminary.

\begin{lem}\label{lem:lotsofprimes} Let $f \in \{\phi, \sigma\}$. The number of $f$-values $v \leq x$ with a preimage $n$ for which
\begin{enumerate}
\item $(x_1(n;x), \dots, x_L(n;x)) \in \fancyS_L(\vxi)$,
\item $n$ has fewer than $L+1$ odd prime divisors (counted with multiplicity),
\end{enumerate}
is $\ll V_{f}(x)/\log_2{x}$.
\end{lem}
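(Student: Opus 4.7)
My plan is to bound $|\fancyB|$, the number of admissible $f$-values $v \leq x$, by choosing preimages and applying a free-prime counting argument in conjunction with Lemma~\ref{lem:RLbound1}. For each $v \in \fancyB$, fix a preimage $n$; setting aside the trivial $O(\log x)$ contribution from pure powers of $2$ and the smaller contribution from $n$ with $p_0(n)^2\mid n$, write $n = 2^a p n'$ with $p = p_0(n)$ the largest odd prime factor. Condition~(2) forces $\Omega(n')\leq L-1$, and condition~(1) reduces as follows: using $x_i(n;x) = x_{i-1}(n';x)$ for $i\geq 1$ and $x_{L-1}(n';x) = 0$ (since $\Omega(n')\leq L-1$), the inequalities defining $\fancyS_L(\vxi)$ on $(x_1(n;x),\dots,x_L(n;x))$ reduce to those defining $\fancyS_{L-1}(\vxi^{*})$ on $(x_0(n';x),\dots,x_{L-2}(n';x))$, where $\vxi^{*} := (\xi_0,\dots,\xi_{L-3})$; the leftover inequality $a_1 x_{L-2}(n';x) \leq \xi_{L-2} x_{L-3}(n';x)$ coming from $(I_{L-2})$ of $\fancyS_L$ is automatic since $a_1 < \xi_{L-2}$. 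Thus $n'\in\fancyR_{L-1}(\vxi^{*};x)$.

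From $v = f(2^a)f(p)f(n')\leq x$ and $f(p)\geq p/2$, the free prime satisfies $p\leq 2x/(f(2^a)f(n'))$, so
\[
 |\fancyB|\ll \log x + \sum_{a\geq 0}\sum_{n'\in\fancyR_{L-1}(\vxi^{*};x)}\pi\!\Bigl(\frac{2x}{f(2^a)f(n')}\Bigr).
\]
Set $w:=f(2^a)f(n')$ and split at $w=x^{1/2}$. In the dominant range $w\leq x^{1/2}$, $\log(2x/w)\gg\log x$, so $\pi(2x/w)\ll x/(w\log x)$, and therefore
\[
 \sum_{w\leq x^{1/2}}\pi\!\Bigl(\frac{2x}{w}\Bigr)\ll\frac{x}{\log x}\sum_{a\geq 0}\frac{1}{f(2^a)}\sum_{n'\in\fancyR_{L-1}(\vxi^{*};x)}\frac{1}{f(n')}\ll \frac{xY(x)}{\log x}(\log_2 x)^{-1/C},
\]
where I invoked Lemmas~\ref{lem:RLbound1} and~\ref{lem:volbound} with $L-1=L_0-(\Psi+1)$ and $\Psi=2\sqrt{\log_3 x}$ so that $\exp(-(\Psi+1)^2/(4C))\leq(\log_2 x)^{-1/C}$, and used $\sum_a 1/f(2^a)=O(1)$. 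Since $1/C>1$ and $V_f(x)\asymp xY(x)/\log x$, this contribution is $\ll V_f(x)(\log_2 x)^{-1/C}= o(V_f(x)/\log_2 x)$.

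The residual range $w>x^{1/2}$ corresponds to $n'$ being atypically large, which forces $p\leq 2x^{1/2}$ and, by Lemma~\ref{lem:xcomparison}, squeezes the decomposition of $n'$ so that $n'/p_0(n')$ is confined to a range of size $\exp((\log x)^{O(\rho^2)})$ and $p_0(n')$ to a narrow window near $x^{1/2}$. I handle this case by iterating the strip-off procedure: write $n'=qn''$ with $q=p_0(n')$ and $n''\in\fancyR_{L-2}(\vxi^{**};x)$, then count pairs $(p,q)$ of primes. Combining the standard two-prime count with the deeper sum-of-reciprocals estimate $\sum 1/f(n'')\ll Y(x)(\log_2 x)^{-1/C}\exp(-2\sqrt{\log_3 x}/C)$ from Lemmas~\ref{lem:RLbound1}--\ref{lem:volbound} at level $L-2$, the residual contribution should likewise be $o(V_f(x)/\log_2 x)$. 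The main technical difficulty is this residual regime: making the iteration quantitatively tight requires a careful interplay between the simplex geometry of Lemma~\ref{lem:xcomparison} and the prime-counting estimates on the narrow ranges to which $p_0(n')$ and $n''$ are confined.
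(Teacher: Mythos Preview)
Your core strategy coincides with the paper's: strip off the largest prime $p_0$, bound the number of choices for $p_0$ by the prime number theorem, and control the sum of $1/f(n')$ over the remainder via Lemmas~\ref{lem:RLbound1} and~\ref{lem:volbound}. Your observation that a single application at level $L-1$ already yields a saving of $(\log_2 x)^{-1/C}$ with $1/C>1$ is correct; the paper's finer partition by the exact odd-prime count $L_0-k+1$ and subsequent sum over $k>L_0-L$ is dominated by its first term and gives the same bound.

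The gap is your handling of the residual regime $w>x^{1/2}$ and the unjustified dismissal of the case $p_0(n)^2\mid n$. The iteration you propose is vague, and it is also unnecessary: both issues evaporate once you show that $p_0$ is large. The paper does this \emph{before} stripping off $p_0$. First discard $n<x/\log x$ (at most $x/\log x$ values) and $\Omega(n)>10\log_2 x$ (Lemma~\ref{Omega lem}); then condition~(i) together with Lemma~\ref{lem:xcomparison} gives $x_2(n;x)<3\rho^2<0.9$, so $p_2\le\exp((\log x)^{0.9})$ and
\[
\frac{n}{p_0p_1}=p_2p_3\cdots\le\exp\bigl(10(\log_2 x)(\log x)^{0.9}\bigr)=x^{o(1)}.
\]
Combined with $n\ge x/\log x$ this forces $p_0\ge x^{2/5}$. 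Now the $p_0^2\mid n$ case contributes at most $\sum_{p\ge x^{2/5}}2x\log_2 x/p^2\ll x^{3/5}\log_2 x$ preimages, hence negligibly many values; and $w=f(n)/f(p_0)\ll x^{3/5}$ uniformly, so $\log(2x/w)\gg\log x$ always and there is no residual range at all. With these two preliminary reductions and the bound $p_0\ge x^{2/5}$ in place, your argument is complete and matches the paper's.
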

\begin{proof} We treat the case when $f=\phi$; the case when $f=\sigma$ requires only small modifications.
We can assume that $x/\log{x} \leq n \leq 2x\log_2{x}$, where the last inequality follows from known results on the minimal order of the Euler function. By Lemma \ref{Omega lem}, we can also assume that $\Omega(n) \leq 10\log_2{x}$. Put $p_i := p_i(n)$, as defined in \S\ref{sec:structure}. Since $(x_1(n;x), \dots, x_L(n;x)) \in \fancyS_L(\vxi)$ by hypothesis, Lemma \ref{lem:xcomparison} gives that $x_2 < 3 \rho^2 < 0.9$, and so $p_2 \leq \exp((\log{x})^{0.9})$. Thus,
\[ n/(p_0 p_1) = p_2 p_3 \cdots \leq \exp(10 (\log_2{x}) (\log{x})^{0.9}) = x^{o(1)},  \]
and so $p_0 \geq x^{2/5}$ (say) for large $x$. In particular, we can assume that $p_0^2 \nmid n$.

Suppose now that $n$ has exactly $L_0-k+1$ odd prime factors, where we fix $k > L_0-L$. Then
\[ v = (p_0-1)\phi(p_1 p_2 \cdots p_{L_0-k}) 2^s \]
for some integer $s\geq 0$. Using the prime number theorem to estimate the number of choices for $p_0$ given $p_1 \cdots p_{L_0-k}$ and $2^s$, we obtain that the number of $v$ of this form is
\[ \ll \frac{x}{\log{x}} \sum_{p_1 \cdots p_{L_0-k}}\frac{1}{\phi(p_1 \cdots p_{L_0-k})} \sum_{s \geq 0} \frac{1}{2^s}. \]
(We use here that $\frac{x}{\phi(p_1 \cdots p_{L_0-k}) 2^s} \gg p_0 \geq x^{2/5}$.) The sum over $s$ is $\ll 1$. To handle the remaining sum, we observe that $p_1 \cdots p_{L_0-k}$ belongs to the set $\curly R_{L_0-k}(\vxi_k, x)$, where $\vxi_k:= (\xi_0, \dots, \xi_{L_0-k-2})$. Thus, the remaining sum is bounded by
\[ R_{L_0-k}^{(\phi)}(\vxi_k; x) = \sum_{m \in \curly R_{L_0-k}(\vxi_k, x)}\frac{1}{\phi(m)}.  \]
So by Lemmas \ref{lem:RLbound1} and \ref{lem:volbound}, both of whose hypotheses are straightforward to verify,
\[  R_{L_0-k}^{(\phi)}(\vxi_k; x) \ll (\log_2{x})^{L_0-k} T_{L_0-k} \leq (\log_2{x})^{L_0-k}
 T_{L_0-k}(\vxi_k) \ll Y(x) \exp(-k^2/4C).
\]
Collecting our estimates, we obtain a bound of
\[ \ll \frac{x}{\log{x}} Y(x) \exp(-k^2/4C) \ll V_{\phi}(x) \exp(-k^2/4C).  \]

Now since $L_0 - L > 2\sqrt{\log_3{x}}$, summing over $k > L_0-L$ gives a final bound which is
\[ \ll V_{\phi}(x) \exp(-(\log_3{x})/C) \ll V_{\phi}(x)/\log_2{x}, \]
as desired.
\end{proof}

For the rest of this paper, we fix $\epsilon > 0$ and assume that $x \geq x_0(\epsilon)$. Put
\begin{equation}\label{eq:Sdeltaomegadef} S:= \exp((\log_2{x})^{36}), \quad \delta := \sqrt{\frac{\log_2{S}}{\log_2{x}}}, \quad \omega := (\log_2{x})^{-1/2+\epsilon/2}. \end{equation}
For $f \in \{\phi, \sigma\}$, let $\A_f$ be the set of $n=p_0(n) p_1(n) \cdots$ satisfying $f(n) \leq x$ and
\begin{enumerate}
\item[(0)] $n \geq x/\log{x}$,
\item[(1)] every squarefull divisor $m$ of $n$ or $f(n)$ satisfies $m \leq \log^2{x}$,
\item[(2)] all of the primes $p_j(n)$ are $S$-normal,
\item[(3)] $\Omega(f(n)) \leq 10\log_2{x}$ and $\Omega(n) \leq 10\log_2{x}$,
\item[(4)] If $d \parallel n$ and $d \geq \exp((\log_2{x})^{1/2})$, then $\Omega(f(d)) \leq 10 \log_2{f(d)}$,
\item[(5)] $(x_1(n;x), \dots, x_L(n;x)) \in \Ss_L(\vxi)$,
\item[(6)] $n$ has at least $L+1$ odd prime divisors,
\item[(7)] $P^{+}(f(p_0)) \geq x^{\frac{1}{\log_2{x}}}$, $p_1(n) < x^{\frac{1}{100\log_2{x}}}$,
\item[(8)] $a_1 x_1 + \dots + a_L x_L \leq 1-\omega$.
\end{enumerate}

The following lemma asserts that a generic $f$-value has all of its preimage in $\A_f$.

\begin{lem}\label{lem:capturemost} For each $f \in \{\phi, \sigma\}$, the number of $f$-values $\leq x$ with a preimage $n\not\in \A_f$ is
\[ \ll V_f(x) (\log_2{x})^{-1/2+\epsilon}.\]
\end{lem}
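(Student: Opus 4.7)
The plan is to show that for each of the nine defining conditions (0)--(8) of $\A_f$, the number of $f$-values $v \leq x$ with some preimage violating that particular condition is $\ll V_f(x)(\log_2 x)^{-1/2+\epsilon}$; summing over the nine conditions then yields the lemma. When verifying a later condition I will freely assume the earlier ones hold for $n$, and I will repeatedly use that $f(n) \leq x$ forces $n = O(x\log_2 x)$.

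Most conditions dispatch via direct citation of preliminary results. Condition (0) contributes at most $x/\log x$ values, which is $\ll V_f(x)/Y(x)$ and hence negligible since $Y(x)$ grows faster than any fixed power of $\log_2 x$. Condition (1) uses the standard estimate that integers $\leq X$ with a squarefull divisor exceeding $T$ number $O(X/\sqrt{T})$, applied with $T=\log^2 x$ to both $n$ and $f(n)\leq x$. Condition (2) is Lemma \ref{LEM210} with $\log S = (\log_2 x)^{36}$, yielding $\ll V_f(x)(\log_2 x)^{-1}$. Condition (3) follows from Lemma \ref{Omega lem} with $\alpha=10$, whose savings $(\log x)^{10\log 2 - 1}$ is ample. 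Conditions (5) and (6) are immediate from Lemma \ref{lem:simplex} (with $\Psi = 2\sqrt{\log_3 x}$, giving $V_f(x)(\log_2 x)^{-1/C}$) and Lemma \ref{lem:lotsofprimes} respectively.

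Condition (4), where $p^a \parallel n$ denotes exact prime-power division, splits in two cases. For $a=1$, the $S$-normality of $p$ coming from (2), combined with \eqref{eq:normalpomega}, yields $\Omega(f(p)) \leq 3\log_2 f(p) < 10\log_2 f(p)$. For $a\geq 2$, the prime power $p^a$ is squarefull, so (1) forces $p^a \leq \log^2 x$ and in particular $p\leq \log x$; there are only $O(\log x)$ such prime powers in the range $[\exp((\log_2 x)^{1/2}),\log^2 x]$, and each contributes a count of bad $n$'s bounded by $(x\log_2 x)/p^a \leq (x\log_2 x)\exp(-(\log_2 x)^{1/2})$, negligible. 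For condition (7), the lower bound $P^+(f(p_0)) \geq x^{1/\log_2 x}$ follows from Lemma \ref{lem:length2} on noting that (0) and (3) force $p_0 \geq n^{1/\Omega(n)} \geq (x/\log x)^{1/(10\log_2 x)}$, whence $\log_2 p_0 \geq \log_2 x - \log(10\log_2 x)$. The upper bound $p_1 < x^{1/(100\log_2 x)}$ is a slice restriction handled, as in the proof of Lemma \ref{lem:lotsofprimes}, by fixing $p_0\geq p_1$ exceeding this threshold and applying Lemmas \ref{lem:RLbound1} and \ref{lem:volbound} to the remaining factorization.

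The chief technical obstacle is condition (8). I must bound the $f$-values whose preimages $n$ satisfy (5) but lie in the boundary shell $1-\omega < a_1 x_1(n;x) + \cdots + a_L x_L(n;x) \leq \xi_0$ of $\Ss_L(\vxi)$. The strategy is to adapt the argument of Lemma \ref{lem:lotsofprimes}: fix $p_0$ (contributing $\asymp x/\log x$), sum $1/f(p_1 p_2\cdots p_L)$ over the tuples lying in the shell via Lemma \ref{lem:RLbound1}, and thereby reduce matters to estimating the shell volume. A scaling/mean-value computation, exploiting the homogeneity of $T_L(\vxi)$ in $\xi_0$, shows that this shell has volume $O(\omega L \cdot T_L(\vxi))$; combined with Lemma \ref{lem:volbound} this yields a bound of order $V_f(x)\cdot \omega L\cdot(\log_2 x)^{-1/C}$, comfortably smaller than $V_f(x)(\log_2 x)^{-1/2+\epsilon}$. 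The specific choice $\omega=(\log_2 x)^{-1/2+\epsilon/2}$ is dictated not by the present lemma, which admits substantial slack, but by the downstream use of (8) in the proof of Theorem \ref{thm:main}, where $\omega$ precisely determines the final exponent. Summing the nine bounds completes the proof.
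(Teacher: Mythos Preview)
Your handling of (0)--(3), (5), (6) is fine and matches the paper. There are genuine gaps in (4), (7), and (8).

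\textbf{Condition (4).} You read $d\parallel n$ as ``$d$ is a prime power exactly dividing $n$,'' but here it means \emph{unitary divisor}: $d\mid n$ with $\gcd(d,n/d)=1$. Your reduction to prime powers does not cover composite unitary divisors, and the bound $\Omega(f(d))\le 10\log_2 f(d)$ does not follow from the prime-power case (since $\sum_i \log_2 f(p_i^{a_i})\neq \log_2\prod_i f(p_i^{a_i})$). The paper instead sets $w=f(d)$, notes $w\mid f(n)$ with $\Omega(w)\ge 10\log_2 w$, and sums $1/w$ via Lemma~\ref{Omega lem}.

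\textbf{Condition (7), first half.} This is \emph{not} automatic. Your bound $p_0\ge (x/\log x)^{1/(10\log_2 x)}$ gives only $\log_2 p_0\ge \log_2 x-\log_3 x-O(1)$, and Lemma~\ref{lem:length2} loses a further $\log_3 x+O(1)$; you obtain $\log_2 P^+(f(p_0))\ge \log_2 x-2\log_3 x-O(1)$, strictly weaker than the required $\log_2 x-\log_3 x$. The paper first uses (5) and (3) to secure $p_0>x^{1/3}$, and then \emph{counts} the exceptional $p_0$ with $P^+(f(p_0))\le x^{1/\log_2 x}$ via the smooth-number bound of Lemma~\ref{lem:CEP}. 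The second half of (7) is likewise not a simple slice estimate; the paper deduces from $x_1\ge 0.999$ and $(I_0)$ that $x_2\le\rho^{3/2}$ or $x_3\le\rho^{5/2}$, and applies Lemma~\ref{lem:smoothness}.

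\textbf{Condition (8).} Your shell-volume idea has two unfilled steps. First, Lemma~\ref{lem:RLbound1} bounds the sum over the \emph{full} region $\fancyR_L(\vxi,x)$; it gives no control over the sum restricted to the shell $1-\omega<\sum a_i x_i\le\xi_0$, and with only upper bounds you cannot subtract. Second, the homogeneity in $\xi_0$ you invoke is broken by the constraint $x_1\le 1$ (note $a_1=2\log 2-1<1$, so $(I_0)$ does not by itself force $x_1\le 1$). The paper proceeds differently: Lemma~\ref{lem:lem42} traps $\sum a_i x_i$ in $(1-\omega,1+\omega)$, which pins $x_1$ to an interval of length $O(\omega)$, so $\sum_{p_1}1/p_1\ll\omega\log_2 x=(\log_2 x)^{1/2+\epsilon/2}$. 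From $(I_1)$ and $F(\rho)=1$ one then finds a bounded index $j$ with $x_j\le\rho^{j-\epsilon/3}$, and Lemma~\ref{lem:smoothness} handles the tail $f(p_j p_{j+1}\cdots)$, recovering one power of $\log_2 x$. The balance of these two factors is exactly what produces the exponent $-1/2+\epsilon$.
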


\begin{rmks}\mbox{}
\begin{enumerate}
\item The $\A_f$ not only satisfy Lemma \ref{lem:capturemost} but do so economically. In fact, from condition (5) and the work of \cite[\S4]{ford98}, we have that $\#\A_f \ll V_f(x)$. Thus, on average, an element of $\V_f(x)$ has only a bounded number of preimages from $\A_f$.  So when we turn in \S\S\ref{sec:fundsieve}-- \ref{sec:application} to counting $\phi$-values arising from solutions to $\phi(a) = \sigma(a')$, with $(a, a') \in \A_{\phi}\times\A_{\sigma}$, we expect not to be (excessively) overcounting.
\item Of the nine conditions defining $\A_f$, conditions (0)--(4) are, in the nomenclature of the introduction, purely \emph{anatomical}, while conditions (5)-(8) depend to some degree on the fine structure theory of \cite{ford98}. Condition (8) is particularly critical. It is (8) which ensures that the sieve bounds developed in \S\ref{sec:fundsieve} result in a nontrivial estimate for $V_{\phi,\sigma}(x)$. Our inability to replace the exponent $\frac{1}{2}$ on $\log_2{x}$ in Lemma \ref{lem:capturemost} (or in Theorem \ref{thm:main}) by a larger number is also rooted in (8).
\end{enumerate}
\end{rmks}

\begin{proof} It is clear that the number of values of $f(n)$ corresponding to $n$ failing (0) or (1) is $\ll x\log_2{x}/\log{x}$, which (recalling \eqref{eq:fordestimate0}) is permissible for us. By Lemma \ref{LEM210} and our choice of $S$, the number of values of $f(n)$ coming from $n$ failing (2) is $\ll V_f(x)/\log_2{x}$. The same holds for values coming from $n$ failing (3), by Lemma \ref{Omega lem}.

Suppose now that $n$ fails condition (4). Then $n$ has a unitary divisor $d \geq \exp((\log_2{x})^{1/2})$ with $\Omega(f(d)) \geq 10\log_2{f(d)}$. Put $w:=f(d)$. Then $w \mid f(n)$, and $f(n) \ll x\log_2{x}$. So if $w \geq x^{1/2}$, then the number of possibilities for $f(n)$ is
\[ \ll x\log_2{x} \sum_{\substack{w \geq x^{1/2} \\ \Omega(w) \geq 10\log_2{w}}}\frac{1}{w} \ll \frac{x\log_2{x}}{\log{x}}, \]
using Lemma \ref{Omega lem} to estimate sum over $w$. If $w \leq x^{1/2}$, we observe that $f(n)/w = f(n/d) \in \V_f$ and deduce that the number of corresponding values of $f(n)$ is
\[ \ll \sum_{\substack{\exp((\log_2{x})^{1/3}) \leq w \leq x^{1/2} \\ \Omega(w) \geq 10\log_2{w}}}V_f(x/w) \ll \frac{x}{\log{x}}Y(x) \sum_{\substack{w \geq \exp((\log_2{x})^{1/3}) \\ \Omega(w) \geq 10\log_2{w}}}\frac{1}{w} \ll \frac{V_f(x)}{\log_2{x}}.\]

By Lemma \ref{lem:simplex}, the number of  $f$-values with a preimage failing (5) is $\ll V_f(x)/\log_2{x}$.
 According to Lemma \ref{lem:lotsofprimes}, the number of $f$-values with a preimage satisfying (5) but not (6) is also $\ll V_f(x)/\log_2{x}$.

Suppose now that $n$ satisfies (0)--(6). From (5), we have $1 + \xi_0 \geq a_1x_1 + a_2 x_2 \geq (a_1 + a_2) x_2$, and so $x_2 \leq 0.8$. So from (3),
\begin{equation}\label{eq:p0large}
 \frac{n}{p_0(n)p_1(n)} = p_2(n) p_3(n) \cdots \leq \exp(10(\log_2{x})(\log{x})^{0.8}) < x^{1/100}.
\end{equation}
In particular, $p_0 > x^{1/3}+1$ and $f(p_0) > x^{1/3}$, so that $v:=f(p_1 p_2 \cdots) \leq x^{2/3}$. The prime $p_0$ satisfies $f(p_0) \leq x/v$. For $z$ with $x^{1/3} < z \leq x$, the number of primes $p_0$ with $f(p_0) \leq z$ and $P^{+}(f(p_0)) \leq x^{1/\log_2{x}}$ is (crudely) bounded by $\Psi(z,x^{1/\log_2{x}}) \ll z/(\log{x})^2$, by Lemma \ref{lem:CEP}. So the number of values of $f(n)$ coming from $n$ with $P^{+}(f(p_0)) \leq x^{1/\log_2{x}}$ is
\[ \ll \sum_{\substack{v \leq x^{2/3}\\ v\in \V_f}} \sum_{\substack{p:~f(p) \leq x/v\\ P^{+}(f(p)) \leq x^{1/\log_2{x}}}} 1 \ll \frac{x}{(\log{x})^2} \sum_{\substack{v\leq x^{2/3} \\ v \in \V_f}}\frac{1}{v} \ll \frac{x}{(\log{x})^{2-\epsilon}}. \]
To handle the second condition in (7), observe that
since $f(p_0) \leq x/v$, the prime number theorem (and the bound $v \leq x^{2/3}$) shows that given $v$,
the number of possibilities for $p_0$ is $\ll x/(v\log{x})$. Suppose that $p_1(n) > x^{\frac{1}{100\log_2{x}}}$.
Then $x_1 = x_1(n; x) \geq 0.999$, and we conclude from $\sum_{i \geq 1}{a_i x_i}\leq \xi_0$ that either $x_2 \leq \rho^{3/2}$ or $x_3 \leq \rho^{5/2}$. Writing $v_2$ for $f(p_2 p_3 \cdots)$ and $v_3$ for $f(p_3 p_4 \cdots)$, we see that the number of such $f$-values is
\begin{multline*} \ll \frac{x}{\log{x}}\sum_{p_1} \frac{1}{p_1}\left(\sum_{P^{+}(v_2) \leq \exp((\log{x})^{\rho^{3/2}})} \frac{1}{v_2} + \sum_{p_2}\frac{1}{p_2}\sum_{P^{+}(v_3) \leq \exp((\log{x})^{\rho^{5/2}})}\frac{1}{v_3}\right) \\
\ll \frac{x}{\log{x}} \log_3{x} \left(Y(x) \left(\frac{\log_3{x}}{\log_2{x}}\right)^{1/2} + (\log_2{x})Y(x)\left(\frac{\log_3{x}}{\log_2{x}}\right)^{3/2} \right)  \ll \frac{V_f(x)}{(\log_2{x})^{1/2-\epsilon}},
\end{multline*}
using Lemma \ref{lem:smoothness} to estimate the sums over $v_2$ and $v_3$.

Finally, we consider $n$ for which (0)--(7) hold but where condition (8) fails. By Lemma \ref{lem:lem42}, we can assume that
\[ a_1 x_1 + \cdots + a_L x_L < 1+\omega, \]
since the number of exceptional $f$-values is $\ll V(x)\exp(-(\log_2{x})^{\epsilon/2})\ll V(x)/\log_2{x}$. Thus,
\begin{equation}\label{eq:squeezed} 1-\omega < a_1 x_1 + \cdots + a_L x_L < 1+\omega, \end{equation}
while by condition ($I_1$) in the definition of $\fancyS_L(\vxi)$,
\[ a_1 x_2 + \cdots + a_{L-1} x_{L} \leq \xi_1 x_1. \]
We claim that if $J$ is fixed large enough depending on $\epsilon$, then there is some $2 \leq j \leq J$ with $x_j \leq \rho^{j-\epsilon/3}$. If not, then for large enough $J$,
\[ \xi_1 x_1 \geq \sum_{j=1}^{J-1} a_{i} x_{i+1} \geq \rho^{1-\epsilon/3} (a_1 \rho + a_2 \rho^2 + \cdots + a_J \rho^{J}) > \rho^{1-\epsilon/4}. \]
Thus, $x_1 \geq \rho^{1-\epsilon/4} \xi_1^{-1} \geq \rho^{1-\epsilon/5}$, and so
\[ \xi_0 \geq \rho^{-\epsilon/5} (a_1 \rho + a_2\rho^2 + \dots + a_J \rho^{J}) \geq \rho^{-\epsilon/6},\]
which is false. This proves the claim. We assume below that $j \in [2,J]$ is chosen as the smallest index with $x_j \leq \rho^{j-\epsilon/3}$; by condition (1), this implies that all of $p_1, \dots, p_{j-1}$ appear to the first power in the prime factorization of $n$.

Now given $x_2, \dots, x_L$, we have from \eqref{eq:squeezed} that $x_1 \in [\alpha, \alpha+2\omega]$ for a certain $\alpha$. Thus,
\[ \sum_{p_1}\frac{1}{p_1} \ll \omega\log_2{x} = (\log_2{x})^{1/2+\epsilon/2}. \]
So the number of $f$-values that arise from $n$ satisfying (0)--(7) but failing (8) is
\begin{multline*}
\ll \frac{x}{\log{x}} \sum_{j=2}^{J}\sum_{p_1, \dots, p_{j-1}} \frac{1}{p_1 \cdots p_{j-1}} \sum_{\substack{P^{+}(v) \leq \exp((\log{x})^{\rho^{j-\epsilon/3}})\\ v \in \V_f}}\frac{1}{v} \\
\ll \frac{x}{\log{x}} \sum_{j=2}^{J}(\log_2{x})^{j-3/2 + \epsilon/2} Y(x) \left(\frac{\log_3 x}{\log_2 x}\right)^{-1+j-\epsilon/3} \ll V_f(x)(\log_2{x})^{-1/2+\epsilon}.
\end{multline*}
This completes the proof of Lemma \ref{lem:capturemost}.
\end{proof}

As a corollary of Lemma \ref{lem:capturemost}, we have that $V_{\phi,\sigma}(x)$ is bounded, up to an additive error of $\ll (V_\phi(x)+V_{\sigma}(x))/(\log_2{x})^{1/2-\epsilon}$, by the number of values $\phi(a)$ that appear in solutions to the equation
\[ \phi(a) = \sigma(a'), \quad\text{where}\quad (a,a') \in \A_{\phi}\times\A_{\sigma}. \]
In \S\S\ref{sec:fundsieve}--\ref{sec:application}, we develop the machinery required to estimate the number of such values. Ultimately, we find that it is smaller than $(V_{\phi}(x)+V_{\sigma}(x))/(\log_2{x})^A$ for any fixed $A$, which immediately gives Theorem \ref{thm:main}.

\section{The fundamental sieve estimate}\label{sec:fundsieve}
\begin{lem}\label{lem51}  Let $y$ be large, $k\geq 1$, $l\geq 0$, $e^e \leq S \leq v_k \leq v_{k-1} \leq \dots \leq v_0 = y$, and $u_j \leq v_j$ for $0 \leq j \leq k-1$. Put $\delta =\sqrt{\log_2{S}/\log_2{y}}$, $\nu_j = \log_2{v_j}/\log_2{y}$, $\mu_j = \log_2{u_j}/\log_2{y}$. Suppose that 
$d$ is a natural number for which $P^{+}(d) \leq v_k$. Moreover, suppose that both of the following hold:
\begin{enumerate}
\item[(a)] For $2 \leq j \leq k-1$, either $(\mu_j,\nu_j) = (\mu_{j-1},\nu_{j-1})$ or $\nu_j \leq \mu_{j-1} -2\delta$. Also, $\nu_{k}\leq \mu_{k-1} - 2\delta$.
\item[(b)] For $1 \leq j \leq k-2$, we have $\nu_j > \nu_{j+2}$.
\end{enumerate}
The number of solutions of
\begin{equation}\label{eq:piqi} (p_0-1)\cdots (p_{k-1}-1) f d = (q_0+1)\cdots (q_{k-1}+1) e \leq y \end{equation}
in $p_0, \dots, p_{k-1}, q_0, \dots, q_{k-1}, e, f$ satisfying
\begin{enumerate}
\item $p_i$ and $q_i$ are $S$-normal primes;
\item $u_i \leq P^{+}(p_i-1), P^{+}(q_i+1) \leq v_i$ for $0 \leq i \leq k-1$;
\item neither $\phi(\prod_{i=0}^{k-1}{p_i})$ nor $\sigma(\prod_{i=0}^{k-1}{q_i})$ is divisible by $r^2$ for a prime $r \geq v_k$;
\item $P^{+}(ef) \leq v_k$; $\Omega(f) \leq 4l\log_2{v_k}$;
\item $p_0-1$ has a divisor $\geq y^{1/2}$ which is composed of primes $> v_1$;
\end{enumerate}
is
\[ \ll \frac{y}{d} (c\log_2{y})^{6k} (k+1)^{\Omega(d)} (\log{v_k})^{8(k+l)\log{(k+1)}+1} (\log{y})^{-2+\sum_{i=1}^{k-1}a_i \nu_i +E}, \]
where $E = \delta \sum_{i=2}^{k}{(i\log{i}+i)} + 2 \sum_{i=1}^{k-1} (\nu_i-\mu_i)$.
Here $c$ is an absolute positive constant.
\end{lem}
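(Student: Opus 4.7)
The plan is to adapt the iterative sieve argument of Maier and Pomerance \cite{MP88}, as developed in \cite[\S5]{ford98}, to the present hybrid setting in which one must bound a pair of prime factorizations simultaneously. I would begin by decomposing $p_i - 1 = \pi_i m_i$ and $q_i + 1 = \varpi_i n_i$, where $\pi_i = P^{+}(p_i - 1)$ and $\varpi_i = P^{+}(q_i + 1)$, so that hypothesis (2) yields $u_i \le \pi_i, \varpi_i \le v_i$. Using the $S$-normality of each $p_i, q_i$ together with hypothesis (3) and the $S$-normality bound \eqref{eq:normalpomega}, the tails $m_i$ and $n_i$ have $O(\log_2 y)$ prime factors, and every prime $\ge v_k$ on either side of \eqref{eq:piqi} appears to the first power. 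Consequently \eqref{eq:piqi} forces a bijection between the large prime factors of the two sides; hypothesis (b), which prevents the dyadic intervals $[v_{j+2}, v_j]$ from collapsing, together with the standard nesting (a), makes this matching essentially rigid up to a combinatorial overhead that will be absorbed into the prefactor $(c\log_2 y)^{6k}$.

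Having fixed $d$, $e$, $f$, the smooth tails $(m_i, n_i)$, and the matching, I would sieve iteratively at levels $i = 0, 1, \dots, k-1$. At level $0$, hypothesis (5) supplies a divisor $D \ge y^{1/2}$ of $p_0-1$ composed of primes $>v_1$; summing over $D$ essentially fixes $p_0$ up to a factor of $(\log y)^{-1}$ and, through \eqref{eq:piqi}, determines most of the RHS. At level $i\geq 1$, with everything at earlier levels fixed, I would apply Lemma \ref{sieve linear factors} to the pair $(p_i, q_i)$: the two primality conditions, coupled with the congruence constraints imposed by $m_i, n_i$ and by the matching to the opposite side, yield a bound of the shape $\frac{\pi_i \varpi_i}{\phi(m_i)\phi(n_i) (\log \pi_i)(\log \varpi_i)}$ times a logarithmic correction $\log_2(|E|+2)^2$ absorbed into the $(c\log_2 y)^{6k}$ prefactor. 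Summing the resulting expression over allowed $(\pi_i, \varpi_i, m_i, n_i)$ with $P^+$ in the prescribed range produces $\sigma_{-1}$-type sums over primes in dyadic ranges, whose cumulative contribution matches the exponent $\sum_{i=1}^{k-1} a_i \nu_i$ after the telescoping identity behind the definition \eqref{eq:somedefs} of $a_i$; the coefficient $a_i = (i+1)\log(i+1) - i\log i - 1$ emerges because a prime with top divisor at level $i$ carries $i+1$ nested sieve layers below it.

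The remaining ingredients of the final bound are routine bookkeeping: the factor $y/d$ is the overall size of the common value; $(k+1)^{\Omega(d)}$ accounts for distributing the prime power divisors of $d$ among the $k$ factors $p_i - 1$ on the LHS; the factor $(\log v_k)^{8(k+l)\log(k+1)+1}$ absorbs the enumeration of the smooth parts $e$ and $f$, using hypothesis (4) to bound $\Omega(f)$ and hypothesis (3) to bound $\Omega(e)$; and the exponent $E$ collects the $O(\delta)$ slack from (a), the $\nu_i - \mu_i$ tolerance in (2), and the overhead of Stirling. The main obstacle I anticipate is uniform control of the matching step: when several of the $\pi_i$ or $\varpi_i$ cluster near a common dyadic boundary, a naive enumeration risks a $k!$ overcount that would swamp the sieve savings. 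Containing this requires a careful canonicalization of the matching (ordering the large primes by size and allocating them greedily to the $\pi$/$\varpi$ slots), closely parallel to the combinatorial reduction in \cite[\S5]{ford98} but doubled to treat the $\phi$- and $\sigma$-preimages in tandem.
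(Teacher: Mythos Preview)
Your proposal misses the central mechanism of the argument. The decomposition $p_i - 1 = \pi_i m_i$, peeling off only the single largest prime, is not the one that drives the proof; instead one filters each $p_j-1$ and each $q_j+1$ through the \emph{full} hierarchy of thresholds $v_0 > v_1 > \cdots > v_k$, setting $s_{i,j}$ to be the $v_i$-smooth part of $p_j-1$ (and $s'_{i,j}$ the $v_i$-smooth part of $q_j+1$). The pivotal observation is then that
\[
s_i \;:=\; d f \prod_{j=0}^{k-1} s_{i,j} \;=\; e \prod_{j=0}^{k-1} s'_{i,j}
\]
is the \emph{same} integer whether computed from the left or the right of \eqref{eq:piqi}; hence so is $t_i := s_{i-1}/s_i$. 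This product identity is what couples the two sides at every level. It replaces your ``bijection between the large prime factors,'' which does not exist at the level of individual primes---the prime $\pi_i = P^+(p_i-1)$ need not equal any $\varpi_j$, it merely divides some $q_j+1$---and so the ``canonicalization of the matching'' you anticipate as the main obstacle is not the right object to control.

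With $s_i$ in hand the iteration is $\sum_{\sigma_{i-1}} 1/s_{i-1} = \sum_{\sigma_i} (1/s_i) \sum_{\tau_i} 1/t_i$, and at step $i$ one sieves on a \emph{single} variable $Q = P^+(t_{i,i-1})$ (or the pair $Q,Q'$ when $P^+(t_{i,i-1}) \ne P^+(t'_{i,i-1})$), subject to the two simultaneous primality constraints $s_{i,i-1} b Q + 1 = p_{i-1}$ and $s'_{i,i-1} b' Q - 1 = q_{i-1}$. This is how one extracts a saving of $(\log y)^{-2\mu_{i-1}}$ from one summation; your plan of sieving on $(p_i,q_i)$ as an independent pair, yielding a factor $(\log \pi_i)^{-1}(\log \varpi_i)^{-1}$, never links the two sides through the equation and cannot recover this. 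Finally, the coefficients $a_i$ do not come from ``$i+1$ nested sieve layers'' but from the combinatorics of factoring the shared $t_i$ into $i$ pieces on each side: the count is $\le i^{2\Omega(t_i)}$, the $S$-normality hypothesis gives $\Omega(t_i) \le i(\nu_{i-1}-\nu_i+\delta)\log_2 y$, and Lemma~\ref{exp partial} converts this into $(\log y)^{(i\log i + i)(\nu_{i-1}-\nu_i+\delta)}$. Abel-summing the resulting exponents $-2\nu_{i-1} + (i\log i + i)(\nu_{i-1}-\nu_i)$ is what produces $\sum_{i=1}^{k-1} a_i \nu_i$; the feared ``$k!$ overcount from clustering'' is exactly what this factorization bound, rather than any greedy matching, absorbs.
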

\begin{rmks} Since the lemma statement is very complicated, it may be helpful to elaborate on how it will be applied in \S\ref{sec:application} below.
Given $(a,a') \in \A_{\phi}\times\A_{\sigma}$ satisfying $\phi(a) = \sigma(a')$, rewrite the corresponding equation in  the form \eqref{eq:formtocount}, with $d$, $e$, and $f$  as in \eqref{eq:fdedefs0}. (Here $L$ is as in \eqref{eq:Ldef}, and $k$, given more precisely
in the next section, satisfies $k \approx L/2$.) We are concerned with counting the number of values $\phi(a)$ which arise from such solutions. We partition the solutions according to the value of $d$, which describes
the contribution of the ``tiny'' primes to $\phi(a)$, and by the rough location of the primes $p_i$ and $q_i$, which we
encode in the selection of intervals $[u_i, v_i]$ (cf. Lemma \ref{lem:length2}). Finally, we apply Lemma \ref{lem51} and sum over both $d$ and
the possible selections of intervals; this gives an estimate for the number of $\phi(a)$ which is smaller than $(V_{\phi}(x) + V_{\sigma}(x))/(\log_2{x})^A$,
for any fixed $A$.

In our application, conditions (i)--(v) of Lemma \ref{lem51} are either immediate from the definitions, or are readily deduced from the defining properties of
$\A_{\phi}$ and $\A_{\sigma}$. Conditions (a) and (b) are rooted in the observation that while neighboring primes in the prime factorization of $a$ (or $a'$) may be
close together (requiring us to allow $[u_{i+1}, v_{i+1}] = [u_{i}, v_{i}]$), the primes $p_i(a)$ and $p_{i+2}(a)$ are forced
to be far apart on a double-logarithmic scale. Indeed, since $(x_1(a; x), \dots, x_L(a;x)) \in \fancyS_L(\vxi)$, Lemma \ref{lem:xcomparison} shows that
$x_{i+2} < 3\rho^2 x_i < 0.9 x_i$.
\end{rmks}

\begin{proof}
We consider separately the prime factors of each shifted prime lying in
each interval $(v_{i+1},v_{i}]$.
For  $0\le j \le k-1$ and $0\le i\le k$, let
\[
s_{i,j}(n) = \prod_{\substack{ p^a\parallel (p_j-1) \\p\le v_i }} p^a, \qquad
s'_{i,j}(n) = \prod_{\substack{ p^a\parallel (q_j+1) \\p\le v_i }} p^a, \qquad
s_i = d f \prod_{j=0}^{k-1} s_{i,j} = e\prod_{j=0}^{k-1} s'_{i,j}.
\]
Also, for $0\le j\le k-1$ and $1\le i\le k$, let
\[
t_{i,j} = \frac{s_{i-1,j}}{s_{i,j}}, \qquad t'_{i,j} = \frac{s'_{i-1,j}}{s'_{i,j}},
\qquad t_i = \prod_{j=0}^{k-1} t_{i,j} = \prod_{j=0}^{k-1} t'_{i,j}.
\]
For each solution $\fancyA=(p_0,\ldots,p_{k-1},f,q_0,\ldots,q_{k-1},e)$
of \eqref{eq:piqi}, let
\begin{align*}
\sigma_i(\fancyA) &= \{s_i;s_{i,0},\ldots,
s_{i,k-1},f; s_{i,0}', \ldots,
s_{i,k-1}',e\},\\
\tau_i(\fancyA) &= \{t_i; t_{i,0},\ldots,t_{i,k-1},1;
t'_{i,0},\ldots,t'_{i,k-1},1\}.
\end{align*}
Defining multiplication of $(2k+l+2)$-tuples by component-wise multiplication,
we have
\be\label{sigtau}
\sigma_{i-1}(\fancyA) = \sigma_i(\fancyA) \tau_i(\fancyA).
\ee
Let $\mathfrak S_i$ denote the set of $\sigma_i(\fancyA)$ arising from solutions
$\fancyA$ of \eqref{eq:piqi} and $\mathfrak T_i$ the corresponding set of
$\tau_i(\fancyA)$.
By \eqref{sigtau}, the number of solutions of \eqref{eq:piqi} satisfying the required
conditions is
\be\label{S0}
|\mathfrak S_0| =
\sum_{\sigma_1 \in \mathfrak S_1} \sum_{\substack{ \tau_1 \in \mathfrak T_1 \\
\sigma_1\tau_1 \in \mathfrak S_0 }} 1.
\ee

First, fix $\sigma_1\in \mathfrak S_1$.  By assumption (v) in the lemma,
$t_{1,0} \ge y^{1/2}$.  Also, $t_1=t_{1,0}=t_{1,0}' \le y/s_1$, $t_1$
is composed of primes $> v_1$, and also $s_{1,0}t_1+1$ and $s'_{1,0}t_1-1$
are prime.
Write $t_1=t_1'Q$, where $Q=P^+(t_1)$.
Since $p_0$ is an $S$-normal prime, \eqref{eq:normalpomega} gives that
\[ Q\ge t_1^{1/\Omega(t_1)} \ge t_1^{1/\Omega(p_0-1)} \geq y^{1/(2\Omega(p_0-1))} \geq
y^{1/(6\log_2 y)},\]
Given $t_1'$, Lemma \ref{sieve linear factors} implies that the number of
$Q$ is $O(y(\log_2 y)^6/(s_1 t_1' \log^3 y))$.  Moreover,
\[ \sum\frac{1}{t_1'} \leq \prod_{v_1< p\leq y} \left(1+\frac{1}{p}+\frac{1}{p^2}+\dots\right) \ll \frac{\log{y}}{\log{v_1}} = (\log{y})^{1-\nu_1}.\]
Consequently, for each $\sigma_1\in \mathfrak S_1$,
\be\label{sigma1}
\sum_{\substack{ \tau_1 \in \mathfrak T_1 \\ \sigma_1\tau_1 \in \mathfrak S_0 }}
 1 \ll \frac{y (\log_2 y)^6}{s_1 (\log y)^{2+\nu_1}}.
\ee

Next, suppose $2\le i\le k$.  We now apply an iterative procedure: If $v_{i} < v_{i-1}$, we use the identity
\be\label{st iter}
\sum_{\sigma_{i-1} \in \mathfrak S_{i-1}} \frac1{s_{i-1}} =
\sum_{\sigma_i \in \mathfrak S_i} \frac1{s_i}
 \sum_{\substack{ \tau_i \in \mathfrak T_i \\ \sigma_i\tau_i \in
\mathfrak S_{i-1} } }  \frac1{t_i}.
\ee
If $v_{i} = v_{i-1}$, then \eqref{st iter} remains true but contains no information, and in this case we use the alternative identity
\be\label{st iter2}
\sum_{\sigma_{i-1} \in \mathfrak S_{i-1}} \frac1{s_{i-1}} =
\sum_{\sigma_{i+1} \in \mathfrak S_{i+1}} \frac1{s_{i+1}}
 \sum_{\substack{ \tau_{i+1} \in \mathfrak T_{i+1} \\ \sigma_{i+1}\tau_{i+1} \in
\mathfrak S_{i-1} } }  \frac1{t_{i+1}}.
\ee

We consider first the simpler case when $v_i < v_{i-1}$. Suppose $\sigma_i\in \mathfrak S_i$,
$\tau_i\in \mathfrak T_i$ and $\sigma_i\tau_i\in \mathfrak S_{i-1}$.
By assumption (ii),
$$
t_i=t_{i,0} \cdots t_{i,i-1} = t'_{i,0} \cdots t'_{i,i-1}.
$$
In addition, $s_{i,i-1}t_{i,i-1}+1=p_{i-1}$ and
 $s'_{i,i-1}t'_{i,i-1}-1=q_{i-1}$
are prime. Let $Q := P^+(t_{i,i-1})$, $Q' := P^+(t'_{i,i-1})$,
$b:=t_{i,i-1}/Q$ and $b':=t'_{i,i-1}/Q'$.

We consider separately $\mathfrak T_{i,1}$, the set of $\tau_i$ with $Q=Q'$
and $\mathfrak T_{i,2}$, the set of $\tau_i$ with $Q \ne Q'$.
First,
$$
\Sigma_1 := \sum_{\substack{ \tau_i\in \mathfrak T_{i,1} \\
\sigma_i\tau_i \in \mathfrak S_{i-1}
} } \frac1{t_i} \le \sum_t \frac{h(t)}{t}\max_{b,b'} \sum_{Q} \frac1{Q},
$$
where $h(t)$ denotes the number of solutions of
$t_{i,0} \cdots t_{i,i-2} b = t = t'_{i,0} \cdots t'_{i,i-2} b'$,
and in the sum on $Q$,
$s_{i,i-1}bQ+1$ and $s'_{i,i-1}b'Q-1$ are prime.
By Lemma \ref{sieve linear factors}, the
number of $Q\le z$ is $\ll z(\log z)^{-3} (\log_2 y)^3$ uniformly in
$b,b'$.  By partial summation,
\[
\sum_{Q \ge u_{i-1}} \frac{1}{Q} \ll (\log_2 y)^3 (\log y)^{-2\mu_{i-1}}.
\]
Also, $h(t)$ is at most the number of dual factorizations of $t$ into
$i$ factors each, i.e., $h(t) \le i^{2\Omega(t)}$.
By \eqref{normal},
$\Omega(t) \le i(\nu_{i-1} - \nu_{i} +\d)\log_2 y =: I.$
Also, by assumption (iii), $t$ is squarefree.
Thus,
\begin{equation}\label{eq:ht0}
\sum_t \frac{h(t)}{t} \le \sum_{j\le I} \frac{i^{2j}H^j}{j!},\end{equation}
where
$$
\sum_{v_{i} < p \le v_{i-1}} \frac{1}{p} \le (\nu_{i-1} -
 \nu_{i})\log_2 y + 1 =: H.
$$
By assumption (a), $\nu_{i-1}-\nu_{i} \ge 2\d,$
hence $I \le \frac32 iH \le \frac34 i^2H$.
Applying Lemma \ref{exp partial}  (with $\a \leq \frac34$) to estimate the right-hand side of \eqref{eq:ht0}, we find that \be\label{sum h(t)}
\sum_t \frac{h(t)}{t} \le \pfrac{eHi^2}{I}^I \le (ei)^I
= (\log y)^{(i+i\log i)(\nu_{i-1}-\nu_{i} + \d)}.
\ee
This gives
\[
\Sigma_1 \ll (\log_2 y)^3 (\log y)^{-2\mu_{i-1} +
(i+i\log i)(\nu_{i-1}-\nu_i + \d) }.
\]

For the sum over $\mathfrak T_{i,2}$, set $t_{i}=tQQ'$.
Note that
$$
t Q' = t_{i,0} \cdots t_{i,i-2}b, \qquad tQ = t'_{i,0} \cdots
t'_{i,i-2} b',
$$
so $Q\mid t'_{i,0} \cdots t'_{i,i-2} b'$ and $Q' \mid t_{i,0} \cdots t_{i,i-2}b$.
If we fix the factors divisible by $Q$ and by $Q'$, then the
number of possible ways to form $t$ is $\le i^{2\Omega(t)}$ as before.
Then
$$
\Sigma_2 := \sum_{\substack{ \tau_i\in \mathfrak T_{i,2} \\ \sigma_i\tau_i\in \mathfrak S_{i-1}}} \frac{1}{t_i}
 \le \sum_t \frac{i^{2\Omega(t)+2}}{t} \max_{b,b'} \sum_{Q,Q'} \frac1{Q Q'},
$$
where $s_{i,i-1}bQ+1$ and $s'_{i,i-1}b'Q'-1$ are prime.
By Lemma \ref{sieve linear factors}, the
number of $Q\le z$ (respectively $Q'\le z$) is $\ll z(\log z)^{-2}
(\log_2 y)^2$. Hence,
$$
\sum_{Q,Q'} \frac1{Q Q'} \ll (\log_2 y)^4 (\log y)^{-2\mu_{i-1}}.
$$
Combined with \eqref{sum h(t)}, this gives
$$
\Sigma_2 \ll i^2 (\log_2 y)^4 (\log y)^{-2 \mu_{i-1} +
(i+i\log i)(\nu_{i-1}-\nu_{i}+\d) }.
$$
From (a) and (b), $i^2 \le k^2 \le (\log_2 y)^2$.  Adding
$\Sigma_1$ and $\Sigma_2$ shows that for each $\sigma_i$,
\be\label{siti}
\sum_{\substack{ \tau_i \in \mathfrak T_i \\  \sigma_i\tau_i \in \mathfrak S_{i-1} }} \frac1{t_i}
\ll (\log_2 y)^6 (\log y)^{-2\mu_{i-1} +
 (i\log i + i)(\nu_{i-1}-\nu_{i}+\d)}.
\ee

We consider now the case when $v_{i} = v_{i-1}$. Set $Q_1 := P^{+}(t_{i+1,i-1})$, $Q_2 := P^{+}(t_{i+1,i})$, $Q_3 := P^{+}(t_{i+1,i-1}')$, and $Q_4 := P^{+}(t_{i+1,i}')$. From (iii), we have that $Q_1 \neq Q_2$ and $Q_3 \neq Q_4$. Moreover, letting $b_i$ denote the cofactor of $Q_i$ in each case, we have that
\begin{alignat}{2}
s_{i+1,i-1} b_1 Q_1 + 1 &= p_{i-1}, &\qquad\qquad s_{i+1,i-1}' b_3 Q_3 - 1 &= q_{i-1},\notag\\
s_{i+1,i} b_2 Q_2 + 1 &= p_{i}, &\qquad\qquad s_{i+1,i}' b_4 Q_4 - 1 &= q_{i}.\label{eq:psandqs}
\end{alignat}
Since there are now several ways in which the various $Q_i$ may coincide, the combinatorics is more complicated than in the case when $v_i < v_{i-1}$. We index the cases by fixing the incidence matrix $(\delta_{ij})$ with $\delta_{ij}=1$ if $Q_i=Q_j$ and $\delta_{ij}=0$ otherwise.

Write $D = \gcd(Q_1 Q_2, Q_3 Q_4)$, and let $Q := Q_1 Q_2/D$ and $Q':=Q_3 Q_4/D$, so that $D, Q$, and $Q'$ are formally determined by $(\delta_{ij})$. Then $QQ' \mid t_{i+1}$, and writing $t_{i+1}/D = tQQ'$, we have
\begin{align}\label{eq:tQ} t Q&= t_{i+1,0} t_{i+1,1} \cdots t_{i+1,i-2}b_3 b_4, \\
\label{eq:tQprime} tQ'&=  t_{i+1,0}' t_{i+1,1}' \cdots t_{i+1,i-2}' b_1 b_2. \end{align}
We now choose which terms on the right-hand sides of \eqref{eq:tQ} and \eqref{eq:tQprime} contain the prime factors of $Q$ and $Q'$, respectively; since $\Omega(Q)\leq 2$ and $\Omega(Q') \leq 2$, this can be done in at most $(i+1)^4$ ways. Having made this choice, the number of ways to form $t$ is bounded by $(i+1)^{2\Omega(t)}$, and so
\begin{align} \sum_{\substack{\tau_{i+1} \in \frak T_{i+1} \\ \sigma_{i+1} \tau_{i+1} \in \frak S_{i-1}}} \frac{1}{t_{i+1}} &\leq \sum_{t} \frac{(i+1)^{2\Omega(t)+4}}{t} \max_{b_1, b_2, b_3, b_4} \sum \frac{1}{D Q Q'}\label{eq:tequals}.\end{align}
It is easy to check that $DQQ'=\prod_{j \in \J}{Q_j}$, where $\J$ indexes the distinct $Q_j$. For each $j \in \J$, let $n_j$ be the number of linear forms appearing in \eqref{eq:psandqs} involving $Q_j$. Since each of these $n_j$ linear forms in $Q_j$ is prime, as is $Q_j$ itself, Lemma \ref{sieve linear factors} implies that the number of possibilities for $Q_j \leq z$ is $\ll z(\log{z})^{-n_j-1} (\log_2{y})^{n_{j}+1}$, and so
\[ \sum_{Q_j \geq u_{i-1}} \frac{1}{Q_j} \ll (\log_2{y})^{n_j+1} (\log{u_{i-1}})^{-n_j} \ll (\log_2{y})^{n_j+1} (\log{y})^{-n_j \mu_{i-1}},\]
uniformly in the choice of the $b$'s. Since $\sum_{j \in \J}{n_j} =4$ and $\sum_{j \in \J}1 \leq 4$,
\begin{equation}\label{eq:dqqp} \sum\frac{1}{DQQ'} \leq \prod_{j \in \J} \left(\sum_{Q_j \geq u_{i-1}}\frac{1}{Q_j}\right) \ll (\log_2{y})^{8} (\log{y})^{-4\mu_{i-1}}. \end{equation} The calculation \eqref{sum h(t)}, with $i$ replaced by $i+1$, shows that
\begin{equation}\label{eq:hsum2} \sum_{t} \frac{(i+1)^{2\Omega(t)}}{t} \leq (\log{y})^{((i+1)+(i+1)\log(i+1))(\nu_i-\nu_{i+1}+\delta)}. \end{equation}
Combining \eqref{eq:tequals}, \eqref{eq:dqqp}, and \eqref{eq:hsum2} shows that
\begin{multline}
\sum_{\substack{\tau_{i+1} \in \frak T_{i+1} \\ \sigma_{i+1} \tau_{i+1} \in \frak S_{i-1}}} \frac{1}{t_{i+1}} \leq (i+1)^4 (\log_2{y})^8 (\log{y})^{-4\mu_{i-1}+ ((i+1)+(i+1)\log(i+1))(\nu_i-\nu_{i+1}+\delta)}
\\ \leq (\log_2{y})^{12} (\log{y})^{-2\mu_{i-1} + (i\log{i}+i)(\nu_i-\nu_{i-1})-2\mu_{i} + ((i+1)\log(i+1)+(i+1))(\nu_{i+1}-\nu_{i}+\delta)},\label{eq:case2}
\end{multline}
where in the last line we use that $v_{i-1} = v_i$ and $(i+1)^4 \leq k^4 \leq (\log_2{y})^4$.

Using \eqref{S0}, \eqref{st iter}, and \eqref{st iter2} together with the inequalities
\eqref{sigma1}, \eqref{siti}, and \eqref{eq:case2}, we find that the
the number of solutions of \eqref{eq:piqi} is
\[
\ll y (c \log_2 y)^{6k}
(\log y)^{-2-\nu_{1} + \sum_{i=2}^k (\nu_{i-1} - \nu_{i}+\d)(i\log i + i)
 - 2\mu_{i-1} } \sum_{\sigma_k \in \mathfrak S_k}
\frac1{s_k},
\]
where $c$ is some positive constant.  Note that the exponent
of $\log y$ is $\le -2+\sum_{i=1}^{k-1} a_i \nu_i + E$.

It remains to treat the sum on $\sigma_k$.
Given $s_k'=s_k/d$, the number of possible $\sigma_k$ is
 at most the number of factorizations of $s_k'$ into $k+1$ factors
times the number of factorizations of $ds_k'$ into $k+1$ factors,
which is at most $(k+1)^{\Omega(ds_k')} (k+1)^{\Omega(s_k')}$.
By assumptions (i) and (iv), $\Omega(s_k')\le 4(k+l)\log_2 v_k$.
Thus,
\[
\sum_{\sigma_k \in \mathfrak S_k} \frac1{s_k} \le \frac{(k+1)^{\Omega(d)}
(k+1)^{8(k+l)\log_2 v_k}}{d} \sum_{P^+(s_k')\le v_k} \frac{1}{s_k'}
\ll \frac{(k+1)^{\Omega(d)} (\log v_k)^{8(k+l)\log(k+1)+1}}{d}.
\qedhere
\]
\end{proof}

\section{Counting common values: Application of Lemma \ref{lem51}}\label{sec:application} In this section we prove the following proposition, which combined with Lemma \ref{lem:capturemost} immediately yields  Theorem \ref{thm:main}. Throughout the rest of this paper, we adopt the definitions of $L$, the $\xi_i$, $S$, $\delta$, and $\omega$ from  \eqref{eq:Ldef} and \eqref{eq:Sdeltaomegadef}.

\begin{prop}\label{prop:main} Fix $A > 0$. For large $x$, the number of distinct values of $\phi(a)$ that arise from solutions to the equation
\[ \phi(a) = \sigma(a'), \quad\text{with}\quad (a,a') \in \A_{\phi}\times\A_{\sigma}, \]
is smaller than $(V_{\phi}(x)+V_{\sigma}(x))/(\log_2{x})^{A}$.
\end{prop}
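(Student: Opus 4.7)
The plan is to pick an intermediate index $k$ of order $L/2$, and then for every pair $(a,a') \in \A_\phi \times \A_\sigma$ with $\phi(a) = \sigma(a')$, rewrite the equation in the form \eqref{eq:formtocount} with $d,e,f$ as in \eqref{eq:fdedefs0}. Condition (1) in the definitions of $\A_\phi$ and $\A_\sigma$ ensures that the large primes $p_0,\ldots,p_{k-1}$ and $q_0,\ldots,q_{k-1}$ appear to the first power, so this rewriting is valid; the tiny primes are collected into $d$, and the medium primes into $f$ (from $a$) and the medium+tiny primes into $e$ (from $a'$). We then bound the number of distinct $\phi$-values arising this way by partitioning according to the value of $d$ and by the ``interval profile'' of the large primes, and applying Lemma \ref{lem51} on each piece.

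To construct the interval profile, I would discretize $[0,1]$ on a grid of mesh $\delta$ with $M = \lceil 1/\delta\rceil$ cells. For each $1 \le i \le k-1$, let $\nu_i$ be the smallest grid value at least $x_i(a;x)$ and $\mu_i = \nu_i - 1/M$; set $v_i = \exp(\exp(\nu_i \log_2 x))$ and $u_i$ analogously. Whenever $\nu_{i-1}-\nu_i < 2\delta$, I merge by declaring $(\mu_i,\nu_i) := (\mu_{i-1},\nu_{i-1})$, which is exactly the situation Lemma \ref{lem51} accommodates with its alternate iteration \eqref{st iter2}. One first has to show that $p_i(a)$ and $q_i(a')$ fall into the same cell on the double-logarithmic scale; this is established by matching the largest prime factors of $\phi(a)=\sigma(a')$ iteratively, using Lemma \ref{lem:length2} and the $S$-normality of each prime (condition (2)). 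Given that, condition (ii) holds (after a harmless slight enlargement of the intervals accounting for the $(\log_3 x+O(1))/\log_2 x$ slack in Lemma \ref{lem:length2}), condition (i) is condition (2) of $\A_f$, (iii) is condition (1), (iv) follows from condition (3) after taking $l$ of order $\log_2 x/\log_2 v_k$, and (v) is deduced from condition (7) together with $p_0 \ge x^{0.97}$ (the latter being obtained exactly as in the proof of Lemma \ref{lem:capturemost}). Conditions (a) and (b) of Lemma \ref{lem51} follow from the merging rule and from the geometric decay $x_{j+2} \le 3\rho^2 x_j < 0.9\, x_j$ supplied by Lemma \ref{lem:xcomparison}.

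Finally, I would apply Lemma \ref{lem51} with $y = \phi(a) \le x$ and sum. Condition (8) of $\A_\phi$ gives $a_1 x_1 + \cdots + a_L x_L \le 1-\omega$, and since $\nu_i \le x_i + \delta$,
\[
\sum_{i=1}^{k-1} a_i \nu_i \le 1-\omega + O(\delta\,k\log k).
\]
Combined with the bound $E = o(1)$ (easy since $k = O(\log_3 x)$ and $\delta = O(\sqrt{\log_3 x/\log_2 x})$), the exponent of $\log y$ in Lemma \ref{lem51} is at most $-1 - \omega + o(1)$. The sum over profiles contributes a factor at most $M^k \le (\log_2 x)^{O(\log_3 x)}$; the factor $(c\log_2 y)^{6k} (\log v_k)^{8(k+l)\log(k+1)+1}$ is of the same shape; and the sum $\sum_d \frac{(k+1)^{\Omega(d)}}{d}$ (controlled via $P^+(d) \le v_k$, condition (4), and Lemma \ref{lem:smoothness}) costs at most $\exp(O(\sqrt{\log_2 x}\,\log_3 x))$. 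Collecting everything gives a total bound of
\[
\frac{y}{\log y} \cdot \exp\!\bigl(-\omega \log_2 y + O(\sqrt{\log_2 x}\,\log_3 x)\bigr).
\]
Since $\omega \log_2 y = (\log_2 x)^{1/2+\epsilon/2}$ dominates $\sqrt{\log_2 x}\,\log_3 x$ by a factor of $(\log_2 x)^{\epsilon/3}$ in the exponent, and $y/\log y \ll V_\phi(x) + V_\sigma(x)$ by \eqref{eq:fordestimate0}, the resulting bound is smaller than $(V_\phi(x)+V_\sigma(x))/(\log_2 x)^A$ for any fixed $A$, giving the proposition.

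The hard part is making the polylogarithmic bookkeeping work cleanly. The saving $(\log y)^{-\omega}$ from condition (8) of $\A_\phi$ must survive three separate polylog costs: the $(c\log_2 y)^{6k}$ factor of Lemma \ref{lem51}, the sum over the $M^k$ interval profiles, and the $d$-sum weighted by $(k+1)^{\Omega(d)}$. That this all works depends on the specific choices of $L$, $S$, $\delta$, $\omega$ made in \eqref{eq:Ldef} and \eqref{eq:Sdeltaomegadef}, and in particular on $\omega$ being a negative power of $\log_2 x$ rather than a positive one. A secondary obstacle is handling the merging of intervals: neighboring large primes of $a$ can be arbitrarily close together on the double-log scale, which is precisely why Lemma \ref{lem51} is designed around two iteration identities \eqref{st iter}, \eqref{st iter2} and why the careful verification of conditions (a)--(b) is essential.
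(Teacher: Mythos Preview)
Your overall strategy matches the paper's, but there is a genuine gap in your handling of the parameter $l$ that makes the final bound vacuous. You propose to verify hypothesis (iv) of Lemma~\ref{lem51} via condition (3) of $\A_\phi$, which gives only the global bound $\Omega(f)\le\Omega(\phi(a))\le 10\log_2 x$; this forces $l\gg\log_2 x/\log_2 v_k$. Since $k\sim L/2\sim C\log_3 x$ gives $\nu_k\asymp x_k\asymp(\log_2 x)^{-1/2+o(1)}$, one has $\log_2 v_k\asymp(\log_2 x)^{1/2+o(1)}$ and hence $l\asymp(\log_2 x)^{1/2+o(1)}$. But then
\[
(\log v_k)^{8(k+l)\log(k+1)}=\exp\bigl(8(k+l)\log(k+1)\cdot\log_2 v_k\bigr)
=\exp\bigl(O(\log_2 x\cdot\log_4 x)\bigr),
\]
which is \emph{not} absorbed by the saving $(\log x)^{-\omega}=\exp(-(\log_2 x)^{1/2+\epsilon/2})$ from condition (8). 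Your assertion that this factor is ``of the same shape'' as $M^k=\exp(O((\log_3 x)^2))$ is therefore wrong, and the collected bound is useless. The paper instead takes $l=L-k\sim\log_3 x$ and bounds $\Omega(f)$ prime by prime: each $p_j$ with $k\le j\le L-1$ is $S$-normal (condition (2)) and $\le v_k$, so $\Omega(\phi(p_j))\le 3\log_2 v_k$ by \eqref{eq:normalpomega}, giving $\Omega(f)\le 3l\log_2 v_k$ plus a small correction for the squarefull part (handled via conditions (1) and (4)). With this $l$ one gets $(\log v_k)^{8(k+l)\log(k+1)}\le\exp((\log_2 x)^{1/2+\epsilon/4})$, which \emph{is} beaten by the saving.

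A secondary issue: your grid mesh $\delta$ is too fine. Counting prime factors of $\phi(a)=\sigma(a')$ in $(p_j,q_j]$ via $S$-normality gives only $|x_j-y_j|\le(2j+1)\delta$, which for $j$ near $k$ is of order $L\delta$, not $\delta$; on top of this there is the Lemma~\ref{lem:length2} slack for passing from $p_j,q_j$ to $P^+(p_j-1),P^+(q_j+1)$. Cells of width $\delta$ thus cannot capture both $P^+(p_j-1)$ and $P^+(q_j+1)$, contrary to what you need for hypothesis (ii). The paper uses mesh $\eta=10L\delta$ precisely to accommodate this. This defect is repairable (with the coarser mesh one still has $\nu_i-x_i\ll\eta$ and $\sum_i a_i\eta\ll\delta L^2\log L=o(\omega)$), but your write-up misidentifies both the source and the size of the required enlargement.
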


Let us once again recall the strategy outlined in the introduction and in the remarks following Lemma \ref{lem51}. Let $(a,a') \in \A_{\phi}\times\A_{\sigma}$
be a solution to $\phi(a) = \sigma(a')$. Let $p_i := p_i(a)$ and $q_i := p_i(a')$, in the notation of \S\ref{sec:structure}. We choose a cutoff  $k$
so that all of $p_0, \dots, p_{k-1}$ and $q_0, \dots, q_{k-1}$ are ``large''. Then by condition (1) in the definition of the sets $\A_f$, neither $p_i^2 \mid a$  nor $q_i^2 \mid a'$, for $0 \leq i \leq k-1$.  Fixing a notion of ``small'' and ``tiny'', we rewrite the equation $\phi(a) = \sigma(a')$ in the form
\begin{equation}\label{eq:formtocount2} (p_0-1)\cdots (p_{k-1}-1) fd = (q_0+1)\cdots(q_{k-1}+1)e, \end{equation}
where $f$ is the contribution to $\phi(a)$ from the ``small'' primes, $d$ is the contribution from the ``tiny'' primes, and $e$ is the contribution of both the ``small'' and ``tiny'' primes to $\sigma(a')$.

We then fix $d$ and numbers $u_i$ and $v_i$, chosen so that $u_i \leq P^{+}(p_i-1), P^{+}(q_i+1) \leq v_i$ for each $0 \leq i \leq k-1$.
With these fixed, Lemma \ref{lem51} provides us with an upper bound on the number of corresponding solutions to \eqref{eq:formtocount2}. Such a solution determines the common value
$\phi(a) = \sigma(a') \in \V_{\phi}\cap\V_{\sigma}$. We complete the proof of Proposition \ref{prop:main} by summing the upper bound estimates over all choices of $d$ and all
selections of the  $u_i$ and $v_i$.

We carry out this plan in four stages, each of which is treated in more detail below:
\begin{itemize}
\item Finalize the notions of ``small'' and ``tiny'', and so also the choices of $d$, $e$, and $f$.
\item Describe how to choose the $u_i$ and $v_i$ so that the intervals $[u_i, v_i]$ capture $P^{+}(p_i-1)$ and $P^{+}(q_i+1)$ for all $0 \leq i \leq k-1$.
\item Check that the hypotheses of Lemma \ref{lem51} are satisfied.
\item Take the estimate of Lemma \ref{lem51} and sum over $d$ and the choices of $u_i$ and $v_i$.\end{itemize}

 \subsection{``Small'' and ``tiny''}\label{sec:smalltiny} Suppose we are given a solution $(a,a') \in \A_{\phi}\times\A_{\sigma}$ to $\phi(a)=\sigma(a')$. Set $x_j = x_j(a; x)$ and $y_j = x_j(a'; x)$, in the notation of \S\ref{sec:structure}, so that (from the definition of $\A_f$) the sequences $\vx =(x_1, \dots, x_L)$ and $\vy = (y_1, \dots, y_L)$ belong to $\fancyS_L(\vxi)$.

\begin{lem}\label{lem:xbounds} With $\{z_j\}_{j=1}^{L}$ denoting either of the sequences $\{x_j\}$ or $\{y_j\}$, we have
\begin{enumerate}
\item $z_j < 3\rho^j$ for $1 \leq j \leq L$,
\item $z_{L-j} \geq \frac{3}{100} \rho^{-j}/\log_2{x}$ for $0 \leq j < L$.
\item $z_{j+2} \leq 0.9 z_j$ for $1 \leq j \leq L-2$.
\end{enumerate}
\end{lem}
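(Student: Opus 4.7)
My plan is to derive all three estimates from the geometric Lemma \ref{lem:xcomparison}, applied to either of the sequences $\vx$ or $\vy$ (both of which lie in $\fancyS_L(\vxi)$ by condition (5) in the definition of $\A_f$). The essential preliminary is to verify the hypothesis of that lemma, namely that $\xi_0^L \xi_1^{L-1}\cdots \xi_{L-2}^2 \le 1.1$. Setting $M := L_0 - L$ and reindexing by $k = L_0 - i$, I compute
\[
\log \prod_{i=0}^{L-2}\xi_i^{L-i} \leq \sum_{i=0}^{L-2}\frac{L-i}{10(L_0-i)^3} = \sum_{k=M+2}^{L_0}\frac{k-M}{10 k^3} \leq \sum_{k\geq M+2}\frac{1}{10 k^2} \leq \frac{1}{10(M+1)}.
\]
Since $M \ge 2\sqrt{\log_3{x}}$ by~\eqref{eq:Ldef}, this is smaller than $\log{1.1}$ once $x$ is sufficiently large, so Lemma \ref{lem:xcomparison} applies. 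Part (i) is then an immediate conclusion of that lemma, and part (iii) follows from $z_{j+2} \leq 3\rho^{2} z_j$ together with the numerical check $3\rho^2 = 0.883\ldots < 0.9$.

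Part (ii) requires one additional input: a lower bound on $z_L$. Condition (6) in the definition of $\A_f$ forces the underlying preimage $n$ to have at least $L+1$ odd prime divisors counted with multiplicity. Since the sequence $p_0(n) \ge p_1(n) \ge \cdots$ is non-increasing and every odd prime is $\geq 3$, the $(L+1)$st entry satisfies $p_L(n) \ge 3$, so $z_L \ge \log_2{3}/\log_2{x} > 0.094/\log_2{x}$. For $j = 0$ this already exceeds $\tfrac{3}{100}/\log_2{x}$; for $1 \leq j < L$, applying Lemma \ref{lem:xcomparison} with $i = L-j$ gives $z_L \leq 3\rho^j z_{L-j}$, which rearranges to
\[ z_{L-j} \geq \frac{z_L}{3\rho^j} > \frac{0.094}{3}\cdot\frac{\rho^{-j}}{\log_2{x}} > \frac{3}{100}\cdot\frac{\rho^{-j}}{\log_2{x}}. \]
I anticipate no real obstacle here: every step is either a direct appeal to Lemma \ref{lem:xcomparison}, a short elementary estimate on $\prod \xi_i^{L-i}$, or the trivial observation that $p_L(n) \geq 3$ under condition (6).
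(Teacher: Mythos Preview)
Your proof is correct and follows essentially the same route as the paper's: all three parts are deduced from Lemma~\ref{lem:xcomparison}, with (ii) using condition~(6) to get $z_L \ge \log_2 3/\log_2 x$ and then the inequality $z_L \le 3\rho^{j} z_{L-j}$. The only difference is that you explicitly verify the hypothesis $\xi_0^L\xi_1^{L-1}\cdots\xi_{L-2}^2 \le 1.1$ of Lemma~\ref{lem:xcomparison}, which the paper leaves implicit; your computation of this product is fine.
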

\begin{proof} Claim (i) is repeated verbatim from Lemma \ref{lem:xcomparison}. By the same lemma, $z_j \leq 3\rho^{j-i} z_i$ for $1 \leq i < j \leq L$. This immediately implies (iii), since $\rho^2 < 0.9$. Moreover, fixing $j=L$, condition (6) in the definition of $\A_f$ gives that
\[ z_{i} \geq \frac{1}{3} \rho^{i-L} z_{L} \geq \frac{\log_2{3}}{3} \rho^{i-L}/\log_2{x} > \frac{3}{100} \rho^{i-L}/\log_2{x},\] which is (ii) up to a change of variables.
\end{proof}

\begin{lem}\label{lem:minimalindex} The minimal index $k_0 \leq L$ for which
\[ \log_2 P^{+}(p_{k_0}-1) < (\log_2{x})^{1/2+\epsilon/10}. \]
satisfies $k_0 \sim (1/2-\epsilon/10) L$ as $x\to\infty$.
\end{lem}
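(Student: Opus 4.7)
The plan is to read off $k_0$ from the double-logarithmic size of $p_{k_0}$ by combining the two-sided control on $x_k = \log_2 p_k/\log_2 x$ provided by Lemma~\ref{lem:xbounds}. The pivotal identity, which converts powers of $\rho$ into powers of $\log_2 x$, is
\[
(1/\rho)^{L_0} = (\log_2 x)^{1+o(1)},
\]
a direct consequence of $L_0 = 2C\log_3 x + O(\log_4 x)$ together with $2C = 1/|\log\rho|$. Since $L = L_0(1+o(1))$, the same identity holds with $L$ in place of $L_0$. Fix an auxiliary $\eta>0$; it suffices to show $(1/2-\epsilon/10-\eta)L < k_0 \leq (1/2-\epsilon/10+\eta)L$ for all large $x$.

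For the upper bound on $k_0$, pick any integer $k > (1/2-\epsilon/10+\eta)L$. Lemma~\ref{lem:xbounds}(i) combined with the key identity gives $x_k \leq 3\rho^k \leq (\log_2 x)^{-1/2+\epsilon/10-\eta+o(1)}$, hence $\log_2 p_k \leq (\log_2 x)^{1/2+\epsilon/10-\eta+o(1)}$. Since trivially $P^+(p_k-1) \leq p_k$, the inequality defining $k_0$ is already satisfied at this $k$, so $k_0 \leq k$.

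For the lower bound, pick any integer $k \leq (1/2-\epsilon/10-\eta)L$, so that $L-k \geq (1/2+\epsilon/10+\eta)L$. Lemma~\ref{lem:xbounds}(ii) yields
\[
x_k \geq \frac{3}{100}(1/\rho)^{L-k}/\log_2 x \geq (\log_2 x)^{-1/2+\epsilon/10+\eta+o(1)},
\]
and hence $\log_2 p_k \geq (\log_2 x)^{1/2+\epsilon/10+\eta+o(1)}$. The final step is to transfer this lower bound from $p_k$ to $P^+(p_k-1)$; for this I invoke Lemma~\ref{lem:length2}, whose hypotheses are satisfied because $p_k$ is $S$-normal (condition~(2) of $\A_\phi$) and $\phi(p_k)=p_k-1$ far exceeds $S=\exp((\log_2 x)^{36})$ given the size of $\log_2 p_k$. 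This gives $\log_2 P^+(p_k-1) \geq \log_2 p_k - \log_3 x - \log 4$, and the additive loss $\log_3 x + \log 4$ is swamped by the factor $(\log_2 x)^\eta$ already present, so the right-hand side exceeds $(\log_2 x)^{1/2+\epsilon/10}$ for large $x$. Hence such $k$ fails the defining inequality, so $k_0 > k$. Since $\eta>0$ was arbitrary, $k_0 \sim (1/2-\epsilon/10)L$. There is no genuine obstacle: the multiplicative constants $3$ and $3/100$ in Lemma~\ref{lem:xbounds} and the additive defect in Lemma~\ref{lem:length2} are all absorbed into the slack $(\log_2 x)^\eta$, which is precisely the role of the auxiliary parameter $\eta$.
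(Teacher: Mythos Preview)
Your proof is correct and follows essentially the same route as the paper: the upper bound on $k_0$ comes from Lemma~\ref{lem:xbounds}(i) together with the trivial inequality $P^{+}(p_k-1)\le p_k$, and the lower bound from Lemma~\ref{lem:xbounds}(ii) combined with Lemma~\ref{lem:length2}. You have simply made explicit the conversion $(1/\rho)^{L}=(\log_2 x)^{1+o(1)}$ and the role of the auxiliary slack $\eta$, which the paper leaves implicit in its $o(1)$ notation.
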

\begin{proof} Lemma \ref{lem:xbounds}(i) shows that the least $K$ with $\log_2 p_{K} < (\log_2{x})^{1/2+\epsilon/10}$ satisfies $K \leq (1/2-\epsilon/10+o(1))L$,
as $x\to\infty$. Since $\log_2 P^{+}(p_{K}-1) \leq \log_2 p_K$, this gives the asserted upper bound on $k_0$. The lower bound follows in a similar fashion from Lemma \ref{lem:xbounds}(ii) and Lemma \ref{lem:length2}.
\end{proof}

Recall the definition of $\delta$ from \eqref{eq:Sdeltaomegadef}, and put
\begin{equation}\label{eq:etadef}
 \eta:= 10L\delta,\quad\text{so that}\quad \eta \asymp (\log_3{x})^{3/2} (\log_2{x})^{-1/2}. \end{equation}
We choose our ``large''/``small'' cutoff point $k$ by taking $k=k_0$ if $x_{k_0-1} -x_{k_0} \geq 20\eta$, and taking $k=k_0-1$ otherwise.
For future use, we note that with this choice of $k$,
\begin{equation}\label{eq:finalseparation} x_{k-1} - x_k \geq 20 \eta. \end{equation}
This inequality is immediate if $k=k_0$; in the opposite case, by Lemma \ref{lem:xbounds}(iii),
\begin{align*} x_{k-1}-x_{k} = x_{k_0-2} - x_{k_0-1} &\geq x_{k_0-2}-x_{k_0}-20\eta \\&\geq 0.1 x_{k_0-2} -20\eta
\geq 0.1(\log_2{x})^{-1/2+\epsilon/10} - 20\eta > 20\eta.\end{align*}

Note that with this choice of $k$, we have $\log_2{p_i} > (\log_2{x})^{1/2+\epsilon/10}$ for $0 \leq i \leq k-1$,
and so condition (1) in the definition of $\A_{\phi}$ guarantees that each $p_i$ divides $a$ to the first power only, for $0 \leq i \leq k-1$.
Moreover, from Lemmas \ref{lem:xbounds}(ii) and \ref{lem:minimalindex}, we have $\log_2{q_i} > (\log_2{x})^{1/2+\epsilon/11}$ for $0 \leq i \leq k-1$.
So each $q_i$ divides $a'$ only to the first power, for $0 \leq i \leq k-1$. Now take
\begin{equation}\label{eq:fddefs} f := \phi(p_k p_{k+1} \cdots p_{L-1}), \quad d:=\begin{cases}
\phi(p_L p_{L+1} \cdots) &\text{if $p_{L-1} \neq p_{L}$}, \\
\frac{p_L}{\phi(p_L)}\phi(p_L p_{L+1} \cdots) &\text{if $p_{L-1} =p_{L}$},
\end{cases}\end{equation}
and
\[ e := \sigma(q_k q_{k+1} \cdots), \]
and observe that equation \eqref{eq:formtocount2} holds.

\subsection{Selection of the $u_j$ and $v_j$}\label{sec:ujvjchoice} Rather than choose the $u_j$ and $v_j$ directly, it is more convenient to work with the $\mu_j$ and $\nu_j$;
then $u_j$ and $v_j$  are defined by $u_j: = \exp((\log{x})^{\mu_j})$ and $v_j:=\exp((\log{x})^{\nu_j})$. Put
\begin{equation}\label{eq:zetajdef} \zeta_0 := 1-\frac{\log_3{x} + \log{100}}{\log_2{x}}, \qquad\text{and}\qquad \zeta_j := \zeta_0-j\eta\quad(j\geq 1), \end{equation}
and note that with $\nu_0:=1$ and $\mu_0 :=\zeta_0$, we have
\[ u_0 = x^{1/(100\log_2{x})} < x^{1/\log_2{x}} \leq P^{+}(p_0-1), P^{+}(q_0+1) \leq x = v_0, \]
by condition (7) in the definitions of $\A_{\phi}$ and $\A_{\sigma}$.
To choose the remaining $\mu_j$ and $\nu_j$, it is helpful to know that $p_j$ and $q_j$ are close together
(renormalized on a double logarithmic scale) for $1 \leq j \leq k$. This is the substance of the following lemma.
\begin{lem}\label{lem:length1} If $p_j\geq S$ and $q_j \geq S$, then
\begin{equation}\label{eq:xjyjclose} \left|x_j - y_j\right| \leq (2j+1)\delta < \eta.
\end{equation}
These hypotheses hold if $L-j \geq 2C\log_4{x}+12$, and so in particular for $1 \leq j \leq k$.
\end{lem}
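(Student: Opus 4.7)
The plan is to use the $S$-normality of the prime factors of $a$ and $a'$, together with the identity $\phi(a)=\sigma(a')$, to constrain how the sorted prime factor sequences of $a$ and $a'$ can differ. I handle the two assertions of the lemma in turn.

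For the auxiliary claim that $L-j\ge 2C\log_4 x+12$ implies $p_j,q_j\ge S$, I would apply Lemma~\ref{lem:xbounds}(ii): rearranging, $x_j\ge\tfrac{3}{100}\rho^{-(L-j)}/\log_2 x$. The condition $p_j\ge S$ is equivalent to $x_j\log_2 x\ge\log_2 S=36\log_3 x$, which reduces to $(L-j)\ge 2C(\log 1200+\log_4 x)$, and this is satisfied for large $x$ under the hypothesis. The identical argument gives $q_j\ge S$. Because $k\sim L/2$ by Lemma~\ref{lem:minimalindex} while $L\asymp\log_3 x$ dwarfs $\log_4 x$, the condition $L-j\ge 2C\log_4 x+12$ holds for every $1\le j\le k$ once $x$ is sufficiently large.

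Now fix $T\ge S$. Since $\phi(a)=\sigma(a')$, we have $\Omega(\phi(a),T,\infty)=\Omega(\sigma(a'),T,\infty)$. By condition~(1) in the definition of $\A_\phi$, every prime $p_i$ of multiplicity $\ge 2$ in $a$ satisfies $p_i\le\log^2 x<S\le T$, so higher-power contributions to $\Omega(\phi(a),T,\infty)$ vanish. Invoking the $S$-normality of each $p_i$ (condition~(2)) for the interval $(T,p_i-1]$,
\[
\Omega(\phi(a),T,\infty)=\sum_{i:\,p_i>T}(\log_2 p_i-\log_2 T)+\epsilon_\phi(T),
\]
where $|\epsilon_\phi(T)|\le\sum_{i:\,p_i>T}\sqrt{\log_2 S\log_2 p_i}$. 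By Lemma~\ref{lem:xbounds}(i), $\sqrt{\log_2 p_i}\le\sqrt{3\rho^i\log_2 x}$; since $\sum_{i\ge 0}\rho^{i/2}$ converges, $|\epsilon_\phi(T)|\ll\sqrt{\log_2 S\log_2 x}=\delta\log_2 x$ uniformly in $T$. The analogous bound holds on the $\sigma$-side, so defining
\[
D(T):=\sum_{i:\,p_i>T}(\log_2 p_i-\log_2 T)-\sum_{i:\,q_i>T}(\log_2 q_i-\log_2 T),
\]
we obtain $|D(T)|\ll\delta\log_2 x$ uniformly for $T\ge S$.

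Assume WLOG $p_j\ge q_j$; if $p_j=q_j$ the bound is trivial, so take $p_j>q_j$. For $T\in(q_j,p_j)$, the set $\{i:p_i>T\}$ contains $0,1,\ldots,j$ (size $\ge j+1$), while $\{i:q_i>T\}$ has size $\le j$ since $q_i\le q_j<T$ for all $i\ge j$. Viewing $D$ as a function of $t=\log_2 T$, on this interval $dD/dt=|\{i:q_i>T\}|-|\{i:p_i>T\}|\le-1$. Integrating from $t=\log_2 q_j$ to $t=\log_2 p_j$,
\[
\log_2 p_j-\log_2 q_j\le D(q_j)-D(p_j)\ll\delta\log_2 x,
\]
so $|x_j-y_j|\ll\delta$; for $x$ large, this is bounded by $(2j+1)\delta$ for every $j\ge 1$, and $(2j+1)\delta\le(2L+1)\delta<10L\delta=\eta$ completes the chain. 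The principal obstacle is obtaining the uniform-in-$T$ control $|D(T)|\ll\delta\log_2 x$, rather than a bound that scales with the number of prime factors exceeding $T$. This is exactly what the geometric decay $x_i<3\rho^i$ from Lemma~\ref{lem:xbounds}(i) provides, via the convergence of $\sum_i\sqrt{x_i}$.
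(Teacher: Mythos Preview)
Your approach is genuinely different from the paper's. The paper localizes to the single interval $(p_j,q_j]$: assuming $y_j>x_j$, each of the $j+1$ primes $q_0,\ldots,q_j$ contributes (by $S$-normality) at least $(y_j-x_j-\delta)\log_2 x$ prime factors of $\sigma(a')$ in that interval, while on the $\phi$-side only the $j$ primes $p_0,\ldots,p_{j-1}$ can contribute, each at most $(y_j-x_j+\delta)\log_2 x$. Equating $\Omega(\phi(a),p_j,q_j)=\Omega(\sigma(a'),p_j,q_j)$ gives $(j+1)(y_j-x_j-\delta)\le j(y_j-x_j+\delta)$, i.e.\ $y_j-x_j\le(2j+1)\delta$ on the nose. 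Your global potential $D(T)$ is an elegant alternative, but because it accumulates normality errors from \emph{all} large prime factors rather than from a single window, the implied constant in $|x_j-y_j|\ll\delta$ is absolute---roughly $2\bigl(1+\sqrt{3}\sum_{i\ge1}\rho^{i/2}\bigr)\approx 24$---not $2j+1$.

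This is where your argument has a real gap: the sentence ``for $x$ large, this is bounded by $(2j+1)\delta$ for every $j\ge 1$'' is false, since for $j=1$ you would need $C\delta\le 3\delta$ with $C$ an absolute constant exceeding $20$, and taking $x$ large does nothing to either side. Your method does give $|x_j-y_j|\le C\delta<10L\delta=\eta$ once $L>C/10$, and in fact only the bound $<\eta$ is ever used downstream in the paper; but the intermediate inequality $\le(2j+1)\delta$ as stated does not follow from your route. A smaller point: your uniformity claim $|D(T)|\ll\delta\log_2 x$ for all $T\ge S$ invokes Lemma~\ref{lem:xbounds}(i), which only controls $x_i$ for $1\le i\le L$, so indices $i\ge L$ with $p_i>T$ are unaccounted for. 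At the specific values $T\in\{p_j,q_j\}$ you actually use this is harmless (there $T$ is large enough that $p_L<T$), but the blanket uniformity is stronger than what you have justified.
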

\begin{proof} Suppose for the sake of contradiction that $y_j \geq x_j + (2j+1)\delta$; since the $p_i$ and $q_i$ are all $S$-normal, this would imply that
 \[ (j+1)(y_j-x_j - \delta) \leq \frac{\Omega(\sigma(a'),p_j, q_j)}{\log_2{x}} = \frac{\Omega(\phi(a),p_j,q_j)}{\log_2{x}}\leq j(y_j-x_j + \delta), \]
which is false. We obtain a similar contradiction if we suppose that $x_j \geq y_j + (2j+1)\delta$. The second half of the lemma follows
from Lemma \ref{lem:xbounds} and a short calculation, together with the estimate $k\sim (1/2-\epsilon/10)L$ of Lemma \ref{lem:minimalindex}.
\end{proof}

We choose the intervals $[\mu_j, \nu_j]$ for $1 \leq j \leq k-1$ successively, starting with $j=1$. (We select $\nu_k$ last,
by a different method.) Say that the pair $\{x_j, x_{j+1}\}$ is \emph{well-separated} if $x_j - x_{j+1} \geq 10\eta$, and \emph{poorly separated} otherwise.

In the well-separated case,
among all $\zeta_i$ (with $i \geq 0$), choose $\zeta$ minimal and $\zeta'$ maximal with
\begin{align*} \zeta' \log_2{x} &\leq \log_2 \min\{P^{+}(p_j-1), P^{+}(q_j+1)\}\\
&\leq \log_2\max\{P^{+}(p_j-1), P^{+}(q_j+1)\} \leq \zeta \log_2{x}, \end{align*} and put
\[ \mu_j := \zeta, \qquad \nu_j := \zeta'. \]
In the poorly-separated case,  $j < k-1$, by \eqref{eq:finalseparation}.
We select $[\mu_j, \nu_j]=[\mu_{j+1},\nu_{j+1}]$ by a similar recipe:  Among all $\zeta_i$ (with $i \geq 0$), choose $\zeta$ minimal and $\zeta'$ maximal with
\begin{align*} \zeta' \log_2{x} &\leq \log_2 \min\{P^{+}(p_j-1), P^{+}(q_j+1), P^{+}(p_{j+1}-1), P^{+}(q_{j+1}+1)\}\\
&\leq \log_2\max\{P^{+}(p_j-1), P^{+}(q_j+1), P^{+}(p_{j+1}-1), P^{+}(q_{j+1}+1)\} \leq \zeta \log_2{x}, \end{align*}
and put
\[ \nu_j = \nu_{j+1} = \zeta, \quad\text{and}\quad \mu_j = \mu_{j+1} = \zeta'.\]

To see that these choices are well-defined, note that by (7) in the definition of $\A_f$, we have $x_j, y_j \leq \zeta_0$, which implies that a suitable choice of $\zeta$ above exists in both cases.
Also, for $1 \leq i \leq k$, we have $x_i, y_i \geq (\log_2{x})^{-1/2+\epsilon/11}$ (by Lemma \ref{lem:minimalindex} and \ref{lem:xbounds}(ii)). So by Lemma \ref{lem:length2},
\[ \log_2 \min\{P^{+}(p_i-1), P^{+}(q_i+1)\}/\log_2{x}  \geq (\log_2{x})^{-1/2+\epsilon/12}, \]
say. Since neighboring $\zeta_i$ are spaced at a distance $\eta \asymp (\log_2{x})^{-1/2}(\log_3{x})^{3/2}$, a suitable choice of $\zeta'$ also exists in both cases.

For our application of Lemma \ref{lem51}, it is expedient to keep track at each step of the length of the intervals $[\mu_j, \nu_j]$, as well as
the distance between the left-endpoint of the last interval chosen and the right-endpoint of the succeeding interval (if any).
In the well-separated case, Lemmas \ref{lem:length1} and \ref{lem:length2} show that
\[ \nu_j \leq \max\{x_j, y_j\} + \eta \leq x_j + 2\eta, \]
while
\begin{align} \mu_j &\geq \min\{x_j, y_j\} - \frac{\log_3{x}+\log{4}}{\log_2{x}} - \eta\notag\\ &\geq x_j-3\eta\label{eq:mujlower1},
\end{align}
so that
\[ \nu_j - \mu_j \leq 5\eta.\]
Also, if a succeeding interval exists (so that $j +1 \leq k-1$), then
\[ \nu_{j+1} \leq \max\{x_{j+1}, y_{j+1}\} + \eta \leq x_{j+1} + 2\eta, \]
and the separation between $\mu_j$ and $\nu_{j+1}$ satisfies the lower bound
\begin{equation}\label{eq:musepwell}
 \mu_j - \nu_{j+1} \geq x_j - x_{j+1} - 5\eta \geq 5\eta. \end{equation}
In the poorly separated case, we have
\[ \nu_j \leq \max\{x_j,y_j,x_{j+1},y_{j+1}\} + \eta = \max\{x_j, y_j\} + \eta \leq x_j + 2\eta, \]
as before,  but the lower bound on $\mu_j$ takes a slightly different form;
\begin{align} \notag\mu_j &\geq \min\{x_j, y_j, x_{j+1}, y_{j+1}\} - \frac{\log_3{x}+\log{4}}{\log_2{x}} - \eta \\
&\geq (x_{j+1}-\eta) - \frac{\log_3{x}+\log{4}}{\log_2{x}} - \eta \geq x_{j+1} - 3\eta \geq x_j - 13\eta,
\label{eq:mujlower2}\end{align}
so that
\[ \nu_j - \mu_j \leq 15\eta.\]
In this case, since $\nu_j = \nu_{j+1}$ and $\mu_j = \mu_{j+1}$,  the succeeding interval (if it exists) is
$[\mu_{j+2}, \nu_{j+2}]$. By Lemma \ref{lem:xbounds}(iii),
\[ x_j - x_{j+2} \geq 0.1 x_j \geq 0.1 (\log_2{x})^{-1/2+\epsilon/10} > 20 \eta, \]
say. Thus,
\[ \nu_{j+2} \leq \max\{x_{j+2}, y_{j+2}\} + \eta \leq x_{j+2}+2\eta \leq x_j - 18\eta,\]
and so
\begin{equation}\label{eq:museppoor} \mu_{j+1} - \nu_{j+2} = \mu_j - \nu_{j+2} \geq (x_j-13\eta)-(x_j-18\eta) \geq 5\eta. \end{equation}

At this point we have selected intervals $[\mu_j, \nu_j]$, for all $0 \leq j \leq k-1$. We
choose $\nu_k= \zeta$, where $\zeta$ is the minimal $\zeta_i$ satisfying $\zeta \geq x_k + \eta$. Note that
\[ \log_2{S}/\log_2{x} = 36 \log_3{x}/\log_2{x} < (\log_2{x})^{-1/2+\epsilon/11} \leq x_k < \zeta = \nu_k \leq x_k + 2\eta. \]
Thus, $v_k > S$. From \eqref{eq:mujlower1} and \eqref{eq:mujlower2},
\[ \mu_{k-1} \geq x_{k-1} -3\eta, \]
so that also
\begin{equation}\label{eq:lastsep}
 \mu_{k-1} - \nu_k \geq x_{k-1}-x_k -5\eta \geq 15\eta,\end{equation}
where the last estimate uses \eqref{eq:finalseparation}.

\subsection{Verification of hypotheses} We now check that Lemma \ref{lem51} may be applied with $y=x$. By construction,
$S \leq v_k \leq v_{k-1} \leq \dots \leq v_0 = x$, and $u_i \leq v_i$ for all $0 \leq i \leq k-1$. Moreover,
if $[\mu_j,\nu_j]\neq [\mu_{j-1}, \nu_{j-1}]$ (where $2 \leq j \leq k-1$), then from \eqref{eq:musepwell} and \eqref{eq:museppoor},
\[ \mu_{j-1} - \nu_j \geq 5\eta = 50L\delta > 2\delta, \]
and from \eqref{eq:lastsep},
\[ \mu_{k-1} - \nu_{k} \geq 15\eta > 2\delta. \]
Thus, condition (a) of Lemma \ref{lem51} is satisfied. It follows from our method of selecting the $\mu_j$ and
$\nu_j$ that if $\nu_j = \nu_{j+1}$, then (again by \eqref{eq:museppoor}) $\nu_{j+2}\leq \mu_{j+1} - 5\eta < \nu_{j+1}=\nu_j$, which shows that condition (b) is also satisfied. Moreover, since $\nu_k > x_k$, we have $P^{+}(d) \leq p_L \leq p_k < v_k$.
%
So we may focus our attention on hypotheses (i)--(v) of Lemma \ref{lem51}. We claim that these hypotheses are satisfied with
our choices of $d$, $e$, and $f$ from \S\ref{sec:smalltiny} and with
\begin{equation}\label{eq:ldef} l:=L-k. \end{equation}
Property (i) is contained in (2) from the definition of $\A_f$. By construction,
\[ u_i \leq P^{+}(p_i-1), P^{+}(q_i+1) \leq v_i \]
for all $0 \leq i \leq k-1$, which is (ii). Since $v_k \geq S > \log{y}$, property (iii) holds by (1) in the
definition of $\A_f$. The verification of (iv) is somewhat more intricate. Recalling that $\nu_k > x_k$, it is clear from \eqref{eq:fddefs} 
that
\[ P^{+}(f) < p_k \leq v_k. \]
To prove the same estimate for $P^{+}(e)$, we can assume $e\neq 1$. Let $r = P^{+}(e)$, and observe that
$r \mid \sigma(R)$, for some prime power $R$ with $R \parallel q_k q_{k+1} \cdots$. If $R$ is a
proper prime power, then from (1) in the definition of $\A_f$, we have $r \leq \sigma(R) \leq 2R \leq 2(\log{x})^2 < v_k$. So we can assume that $R$ is prime, and so $R \leq q_k$ and \[ r \leq P^{+}(R+1) \leq \max\{3, R\} \leq q_k.\] But by Lemma \ref{lem:length1},
\[ \log_2{q_k}/\log_2{x} = y_k \leq x_k + (2k+1)\delta < x_k + \eta \leq \nu_k. \]
Thus, $P^{+}(e) = r \leq v_k$. Hence, $P^{+}(ef) \leq v_k$. Turning to the second half of (iv),
write $p_k\cdots p_{L-1} = AB$, where $A$ is squarefree, $B$ is squarefull and $\gcd(A,B)=1$. Recalling \eqref{eq:normalpomega},
we see that
\[ \Omega(\phi(A)) \leq 3\Omega(A)\log_2{v_k} \leq 3l \log_2{v_k}, \]
with $l$ as in \eqref{eq:ldef}. Let $B'$ be the largest divisor of $a$ supported on the primes
dividing $B$, so that $B'$ is squarefull and $B\mid B'$. By (1) in the definition of $\A_f$, we have
$B' \leq (\log{x})^2$. If $B' \leq \exp((\log_2{x})^{1/2})$, then (estimating crudely) \[ \Omega(\phi(B)) \leq \Omega(\phi(B')) \leq 2\log{\phi(B')} \leq 2\log{B'} \leq 2(\log_2{x})^{1/2}.\]
On the other hand, if $B' > \exp((\log_2{x})^{1/2})$, then by (4) in the definition of $\A_f$, \[ \Omega(\phi(B)) \leq \Omega(\phi(B')) \leq 10\log_2{\phi(B')} \leq10\log_2{B'} \ll \log_3{x}.\]
Since $\log_2{v_k} = \nu_k \log_2{x} > \eta \log_2{x} > (\log_2{x})^{1/2}$, we have
we have $\Omega(\phi(B)) \leq 2\log_2{v_k}$ in either case. Hence, \[ \Omega(f) = \Omega(\phi(A)) + \Omega(\phi(B)) \leq (3l +2) \log_2{v_k} \leq 4l\log_2{v_k}, \]
which completes the proof of (iv). Finally, we prove (v): Suppose that $b \geq x^{1/3}$ is a divisor of
$p_0-1$. Recalling again \eqref{eq:normalpomega},
\[ P^{+}(b) \geq b^{1/\Omega(p_0-1)} \geq b^{\frac{1}{3\log_2{x}}} \geq x^{\frac{1}{9\log_2{x}}} > x^{\frac{1}{100\log_2{x}}}\geq v_1. \]
Thus, setting $b$ to be the largest divisor of $p_0-1$ supported on the primes $\leq v_1$, we have $b < x^{1/3}$.
From \eqref{eq:p0large} and conditions (0) and (7) in the definition of $\A_\phi$,
\[ p_0 = \frac{a}{p_1 p_2 p_3 \cdots} > \frac{x/\log{x}}{x^{1/100} p_1} > x^{0.95}, \]
say.  Thus, $(p_0-1)/b$ is a divisor of $p_0-1$ composed of primes $> v_1$ and of size at least $(p_0-1) x^{-1/3} > x^{9/10} x^{-1/3} > x^{1/2}$.
\subsection{Denouement} We are now in a position to establish Proposition \ref{prop:main} and so also Theorem \ref{thm:main}. Suppose that $k$ and the $\mu_i$ and $\nu_i$ are fixed, as is $d$; this also fixes $l=L-k$. By Lemma \ref{lem51}, whose hypotheses were verified above, the number of values $\phi(a)$ coming from corresponding solutions to $\phi(a)=\sigma(a')$, with $(a,a') \in \A_{\phi}\times\A_{\sigma}$, is
\begin{multline}
\ll \frac{x}{d} (c\log_2{x})^{6k} (k+1)^{\Omega(d)} (\log{v_k})^{8(k+l)\log{(k+1)} + 1} (\log{x})^{-2+\sum_{i=1}^{k-1}a_i\nu_i + E} \\
\leq \frac{x}{d} \exp(O((\log_3{x})^2)) L^{\Omega(d)} (\log{v_k})^{L^2}(\log{x})^{-2+\sum_{i=1}^{k-1}{a_i x_i}+E'},\label{eq:endgame0}
\end{multline}
where
\begin{align*} E' :&= E + \sum_{i=1}^{k-1}a_i(\nu_i - x_i)\\
&=\delta \sum_{i=2}^{k}{(i\log{i}+i)} + 2 \sum_{i=1}^{k-1} (\nu_i-\mu_i) +
\sum_{i=1}^{k-1}a_i(\nu_i - x_i). 
\end{align*}
By our choice of $\nu_i$ and $\mu_i$ in \S\ref{sec:ujvjchoice}, we have $\nu_i - \mu_i \ll \eta$ and $\nu_i -x_i \ll \eta$. Hence,
\[ E' \ll \delta L^2\log{L} + \eta\left(L+\sum_{i=1}^{k-1}a_i\right) \ll \delta L^2\log{L} + \eta L^2\log{L} \ll \delta L^3\log{L}.
\]
In combination with (7) from the definition of $\A_{\phi}$, this shows that the exponent of $\log{x}$ on the right-hand side of \eqref{eq:endgame0} is at most $-1-\omega+E' \leq -1-\omega/2$, and so
\[ (\log{x})^{-2+\sum_{i=1}^{k-1}{a_i x_i}+E'} \leq (\log{x})^{-1} \exp\left(-\frac{1}{2}(\log_2{x})^{1/2+\epsilon/2}\right).\]
Moreover, by Lemma \ref{lem:minimalindex} and Lemma \ref{lem:xbounds}(i),
\begin{equation}\label{eq:nukbound} \nu_k \leq x_k + 2\eta \leq (\log_2{x})^{-1/2+\epsilon/9} + 2\eta \leq (\log_2{x})^{-1/2+\epsilon/5},\end{equation}
and hence
\[ (\log{v_k})^{L^2} =\exp(L^2 (\log_2{x})\nu_k) \leq \exp((\log_2{x})^{1/2+\epsilon/4}).\]
Inserting all of this back into \eqref{eq:endgame0}, we obtain an upper bound which is
\begin{equation}\label{eq:beforesumming} \ll \frac{x}{\log{x}} \exp\left(-\frac{1}{3}(\log_2 x)^{1/2+\epsilon/2}\right) \frac{L^{\Omega(d)}}{d}. \end{equation}

Now we sum over the parameters previously held fixed. We have $k < L$; also, for $i > 0$, each $\mu_i$ and $\nu_i$ has the form $\zeta_j$ of \eqref{eq:zetajdef}. Thus, the the number of possibilities for $k$ and the $\mu_i$ and $\nu_i$ is
\begin{equation}\label{eq:munuchoices} \leq L (1+\lfloor \eta^{-1}\rfloor)^{2L} \leq \exp(O((\log_3{x})^2)).\end{equation}Next, we prove that
\begin{equation}\label{eq:uniformOmegad}
 \Omega(d) \ll (\log_2{x})^{1/2} \end{equation}
uniformly for the $d$ under consideration, so that
\begin{equation}\label{eq:uniformL} L^{\Omega(d)} \leq \exp(O((\log_2{x})^{1/2}\log_4{x})). \end{equation}
Put $m:=p_L p_{L+1} \cdots$. Suppose first that $p_L \neq p_{L-1}$, so that $m$ is a unitary divisor of $a$
and $d=\phi(m)$. If $m \leq \exp((\log_2{x})^{1/2})$, then \eqref{eq:uniformOmegad} follows from the crude bound $\Omega(d) \ll \log{d}$. On the other hand, if $m > \exp((\log_2{x})^{1/2})$, then from (4) in the definition of $\A_\phi$, we have $\Omega(d) = \Omega(\phi(m)) \ll \log_2{m}$. But by (3) in the definition of $\A_{\phi}$ and Lemma \ref{lem:xbounds}(i),
\begin{multline*} \log_2{m} \leq \log_2{p_L^{10\log_2{x}}} \ll \log_3{x} + \log_2{p_L} \ll \log_3{x} + \rho^{L} \log_2{x} \\ \ll \log_3{x} + \rho^{-2\sqrt{\log_3{x}}} \rho^{L_0} \log_2{x} \ll \rho^{-2\sqrt{\log_3{x}}} \log_3{x} \ll \exp(O(\sqrt{\log_3{x}})), \end{multline*}
which again gives \eqref{eq:uniformOmegad}. Suppose now that $p_L = p_{L-1}$. In this case, let $m'$ be the largest divisor of $a$ supported on the primes dividing  $m$. Then $d \mid \phi(m')$, and so $\Omega(d) \leq \Omega(\phi(m'))$. Write $m' = p_L^{j} m''$, where $j \geq 2$ and $p_L \nmid m''$; both $p_L^{j}$ and $m''$ are unitary divisors of $a$. We have $\Omega(\phi(m'')) \ll (\log_2{x})^{1/2}$,
by mimicking the argument used for $m$ in the case when $p_L \neq p_{L-1}$. 
Also, $\Omega(\phi(p_L^{j})) \ll (\log_2{x})^{1/2}$ except possibly if $p_L^{j} > \exp((\log_2{x})^{1/2})$, in which case, invoking (1) and (4) in the definition of $\A_{\phi}$,
\[ \Omega(\phi(p_L^{j})) \leq 10\log_2{\phi(p_L^{j})} \leq 10 \log_2{p_L^{j}} \leq 10\log_2{(\log^2{x})} \ll \log_3{x}. \]
So \[ \Omega(d) \leq \Omega(\phi(m')) = \Omega(\phi(p_L^j))+\Omega(\phi(m'')) \ll (\log_2{x})^{1/2},\] confirming \eqref{eq:uniformOmegad}.

Referring back to \eqref{eq:beforesumming}, we see that it remains to only to estimate the sum of $1/d$. Since $P^{+}(d) \leq v_k$, \eqref{eq:nukbound} shows that every prime dividing $d$ belongs to the set $\Pp:=\{p: \log_2 p \leq (\log_2{x})^{1/2+\epsilon/5}\}$. Thus,
\begin{equation}\label{eq:sum1d} \sum\frac{1}{d} \leq \prod_{p \in \Pp}\left(1+\frac{1}{p}+\frac{1}{p^2} + \dots\right) \ll \exp((\log_2{x})^{1/2+\epsilon/5}).\end{equation}

Combining the estimates \eqref{eq:beforesumming}, \eqref{eq:munuchoices}, \eqref{eq:uniformL}, and \eqref{eq:sum1d}, we find that
\[ \#\{\phi(a): a\in \A_{\phi}, a'\in \A_{\sigma}, \phi(a)=\sigma(a')\} \ll
\frac{x}{\log{x}} \exp\left(-\frac{1}{4}(\log_2 x)^{1/2+\epsilon/2}\right),\]
which completes the proof of Proposition \ref{prop:main} and of Theorem \ref{thm:main}.

\bibliographystyle{amsalpha}
\bibliography{phisigma}

\providecommand{\bysame}{\leavevmode\hbox to3em{\hrulefill}\thinspace}
\providecommand{\MR}{\relax\ifhmode\unskip\space\fi MR }
\providecommand{\MRhref}[2]{%
  \href{http://www.ams.org/mathscinet-getitem?mr=#1}{#2}
}
\providecommand{\href}[2]{#2}
\begin{thebibliography}{For98b}

\bibitem[CEP83]{CEP83}
E.~R. Canfield, P.~Erd{\H{o}}s, and C.~Pomerance, \emph{On a problem of
  {O}ppenheim concerning ``factorisatio numerorum''}, J. Number Theory
  \textbf{17} (1983), 1--28.

\bibitem[EG80]{EG80}
P.~Erd\H{o}s and R.~L. Graham, \emph{Old and new problems and results in
  combinatorial number theory}, Monographies de L'Enseignement Math\'ematique,
  vol.~28, Universit\'e de Gen\`eve, Geneva, 1980.

\bibitem[EH73]{EH73}
P.~Erd\H{o}s and R.~R. Hall, \emph{On the values of {E}uler's {$\phi
  $}-function}, Acta Arith. \textbf{22} (1973), 201--206.

\bibitem[EH76]{EH76}
\bysame, \emph{Distinct values of {E}uler's {$\phi$}-function}, Mathematika
  \textbf{23} (1976), 1--3.

\bibitem[Erd35]{erdos35}
P.~Erd\H{o}s, \emph{On the normal number of prime factors of $p-1$ and some
  related problems concerning {E}uler's $\phi$-function}, Quart J. Math
  \textbf{6} (1935), 205--213.

\bibitem[Erd45]{erdos45}
\bysame, \emph{Some remarks on {E}uler's {$\phi$}-function and some related
  problems}, Bull. Amer. Math. Soc. \textbf{51} (1945), 540--544.

\bibitem[Erd59]{erdos59}
\bysame, \emph{Remarks on number theory. {II}. {S}ome problems on the {$\sigma
  $}\ function}, Acta Arith. \textbf{5} (1959), 171--177.

\bibitem[FLP10]{FLP10}
K.~Ford, F.~Luca, and C.~Pomerance, \emph{Common values of the arithmetic
  functions $\phi$ and $\sigma$}, Bull. London Math. Soc. \textbf{42} (2010),
  478--488.

\bibitem[For98a]{ford98}
K.~Ford, \emph{The distribution of totients}, Ramanujan J. \textbf{2} (1998),
  67--151.

\bibitem[For98b]{ford98A}
\bysame, \emph{The distribution of totients}, Electron. Res. Announc. Amer.
  Math. Soc. \textbf{4} (1998), 27--34 (electronic).

\bibitem[Gar10]{Ga}
M.~Garaev, \emph{On the number of common values of arithmetic functions $\phi$
  and $\sigma$ below $x$}, preprint.

\bibitem[HR74]{HR74}
H.~Halberstam and H.-E. Richert, \emph{Sieve methods}, Academic Press, London,
  1974.

\bibitem[HR00]{HR00}
G.~H. Hardy and S.~Ramanujan, \emph{The normal number of prime factors of a
  number {$n$} [{Q}uart. {J}. {M}ath. {\bf 48} (1917), 76--92]}, Collected
  papers of {S}rinivasa {R}amanujan, AMS Chelsea Publ., Providence, RI, 2000,
  pp.~262--275.

\bibitem[HT88]{HT88}
R.~R. Hall and G.~Tenenbaum, \emph{Divisors}, Cambridge Tracts in Mathematics,
  vol.~90, Cambridge University Press, Cambridge, 1988.

\bibitem[MP88]{MP88}
H.~Maier and C.~Pomerance, \emph{On the number of distinct values of {E}uler's
  {$\phi$}-function}, Acta Arith. \textbf{49} (1988), 263--275.

\bibitem[Pil29]{pillai29}
S.~S. Pillai, \emph{On some functions connected with {$\phi(n)$}}, Bull. Amer.
  Math. Soc. \textbf{35} (1929), 832--836.

\bibitem[Pom86]{pomerance86}
C.~Pomerance, \emph{On the distribution of the values of {E}uler's function},
  Acta Arith. \textbf{47} (1986), 63--70.

\end{thebibliography}
\end{document}